\newtheorem{definition}{Definition}[section]
\newtheorem{remark}{Remark}[section]    
\newtheorem{theorem}{Theorem}[section]    
\newtheorem{lemma}{Lemma}[section]	
\newtheorem{corollary}{Corollary}[section]  
\newtheorem{proposition}{Proposition}[section]  
\newtheorem{assumption}{Assumption}
\newcommand{\R}{\mathbb{R}}
\newcommand{\T}{\mathbb{T}}
\newcommand{\vertiii}[1]{{\left\vert\kern-0.25ex\left\vert\kern-0.25ex\left\vert #1
    \right\vert\kern-0.25ex\right\vert\kern-0.25ex\right\vert}}
\def\XXint#1#2#3{{\setbox0=\hbox{$#1{#2#3}{\int}$}
     \vcenter{\hbox{$#2#3$}}\kern-.5\wd0}}
\numberwithin{equation}{section}
\crefname{equation}{}{}
\crefname{assumption}{Assumption}{Assumptions}
\crefname{rmrk}{Remark}{Remarks}
\begin{document}

\title[Numerical approximation of effective diffusivities]{Numerical approximation of effective diffusivities in homogenization of nondivergence-form equations with large drift by a Lagrangian method}

\author[T. Sprekeler]{Timo Sprekeler}
\address{Department of Mathematics, Texas A{\&}M University, College Station, TX 77843, USA.}
\email{timo.sprekeler@tamu.edu}

\author[H. Wu]{Han Wu}
\address{The University of Hong Kong, Department of Mathematics, Pokfulam Road, Hong Kong, China.}
\email{u3009967@connect.hku.hk}

\author[Z. Zhang]{Zhiwen Zhang}
\address{The University of Hong Kong, Department of Mathematics, Pokfulam Road, Hong Kong, China.}
\email{zhangzw@hku.hk}

\subjclass[2020]{35B27,   60H35,  65M12, 65M75.}
\keywords{Numerical homogenization; nondivergence-form equations; modified equation approach;  Monte Carlo method; effective diffusivity; convergence analysis.}
\date{\today}

\begin{abstract}
    In this paper, we study numerical methods for the homogenization of linear second-order elliptic equations in nondivergence-form with periodic diffusion coefficients and large drift terms. Upon noting that the effective diffusion matrix can be characterized through the long-time variance of an associated diffusion process, we construct a Lagrangian numerical scheme based on a direct simulation of the underlying stochastic differential equation and utilizing the framework of modified equations, thereby avoiding the need to solve the Fokker--Planck--Kolmogorov equation. Through modified equation analysis, we derive higher-order weak convergence rates for our method. Finally, we conduct numerical experiments to demonstrate the accuracy of the proposed method. The results show that the method efficiently computes effective diffusivities, even in high dimensions.
\end{abstract}

\maketitle

\section{Introduction}

We study the numerical approximation of the effective diffusion matrix arising in the periodic homogenization of linear nondivergence-form second-order elliptic equations with large drift terms. Specifically, we consider the problem
\begin{equation}\label{Leps}
\begin{aligned}
    \mathcal{L}^\varepsilon u^{\varepsilon} := -A\left(\frac{x}{\varepsilon}\right):\nabla^2 u^{\varepsilon} - \frac{1}{\varepsilon}\,b\left(\frac{x}{\varepsilon}\right)\cdot \nabla u^\varepsilon &= f\quad \text{in }\Omega,\\
    u^\varepsilon &= g\quad \text{on }\partial \Omega,
    \end{aligned}
\end{equation} 
where $\Omega \subset \mathbb{R}^d$ is a sufficiently regular bounded domain, $\varepsilon > 0$ is a small scaling parameter, $A \in C^{\infty}(\mathbb{T}^d ; \mathbb{R}_{\mathrm{sym}}^{d\times d})$ is a uniformly elliptic diffusion matrix satisfying $\lambda I_d \le A \le \Lambda I_d$ in $\mathbb{T}^d$ for some $0<\lambda\leq \Lambda$, $b \in C^\infty(\mathbb{T}^d ; \mathbb{R}^d)$ is a drift coefficient, and $f:\Omega\rightarrow\mathbb{R}$ and $g:\partial\Omega\rightarrow \mathbb{R}$ are sufficiently regular. Throughout this work, $\mathbb{T}^d := \mathbb{R}^d/\mathbb{Z}^d$ denotes the $d$-dimensional flat torus, and $I_d$ the $d\times d$ identity matrix. 

The differential operator $\mathcal{L}^\varepsilon$ in \eqref{Leps} is the infinitesimal generator for the stochastic differential equation
\begin{equation}\label{diffusion with large drift}
\mathrm{d} X^{\varepsilon}_t=\frac{1}{\varepsilon}\, b\left(\frac{X^{\varepsilon}_t}{\varepsilon}\right) \mathrm{d} t+\sigma\left(\frac{X^{\varepsilon}_t}{\varepsilon}\right)  \mathrm{d} W_t,\qquad X_0^\varepsilon = x,
\end{equation}
where $X^\varepsilon_t$ is a $d$-dimensional stochastic processes, $W_t$ is a standard $d$-dimensional Wiener process (Brownian motion), $\sigma:=\sqrt{2} A^{\frac{1}{2}}$ is the diffusion coefficient matrix, and the initial condition $x\in \mathbb{R}^d$ is deterministic.

By a scaling argument, we consider the rescaled process 
\begin{equation}\label{rescaled process}
X_t := \frac{1}{\varepsilon}X^\varepsilon_{\varepsilon^2 t},
\end{equation}
which preserves the probability law of the original diffusion. The rescaled stochastic process $X_t$ is the solution to the problem
\begin{equation}\label{rescaled SDE}
 \mathrm{d}X_t =  b(X_t) \,\mathrm{d}t +\sigma(X_t)\, \mathrm{d} W_t,\qquad X_0 = \frac{x}{\varepsilon}.
\end{equation} 
The infinitesimal generator of this process is given by
\begin{equation}\label{L}
\mathcal{L}v := -A:\nabla^2 v -b\cdot \nabla v.
\end{equation}
 
For the rescaled SDE \eqref{rescaled SDE}, there exists a unique invariant probability measure $\pi$ on the torus $\mathbb{T}^d$, absolutely continuous with respect to the Lebesgue measure with a density $r(y) = \frac{\mathrm{d}\pi}{\mathrm{d}y}$. This density satisfies the following properties. For every $\phi \in C^{\infty}(\mathbb{T}^d)$, there exist constants $K, \rho > 0$ such that 
\begin{equation}\label{Invariant measure}
	\sup_{x \in \mathbb{T}^d} \left|\, \mathbb{E}[\phi(X_t) \mid X_0 = x] - \int_{\mathbb{T}^d} \phi(y) r(y)\, \mathrm{d}y \,\right| \leq K e^{-\rho t}.
\end{equation}
Moreover, the density $r$ is the unique solution to the stationary Fokker--Planck--Kolmogorov equation 
\begin{equation*}
	\mathcal{L}^* r := -\nabla^2 : (A r) + \nabla \cdot (b r) = 0 \quad \text{in } \mathbb{T}^d,
\end{equation*}
subject to the normalization condition $\int_{\mathbb{T}^d} r(y)\,\mathrm{d}y = 1$. 

The existence, uniqueness, and regularity of $r$ follow from classical results in ergodic theory and PDEs (see, e.g.,  \cite{BLP11, PS08}). In particular, the connection between homogenization theory for \eqref{Leps} and SDEs has been rigorously studied in \cite[Chapter 3]{BLP11} using probabilistic methods.
 
We introduce the corrector $\chi = (\chi_1,\ldots,\chi_d):\mathbb{T}^d\rightarrow \R^d$ as the unique solution to the cell problem
\begin{equation}\label{cell_problem_b_bar}
    \mathcal{L}\chi = b \quad\text{in }\mathbb{T}^d, \qquad \int_{\mathbb{T}^d}\chi(y)\, \mathrm{d}y = 0,
\end{equation} 
where $\mathcal{L}$ is the infinitesimal generator of the rescaled SDE defined in \eqref{L}. The existence and uniqueness of $\chi $ are guaranteed by the Fredholm alternative, provided the following centering condition holds:
\begin{align}\label{centering}
	\bar{b} := \int_{\mathbb{T}^d} b(y) r(y)\, \mathrm{d}y = 0,
\end{align}
where $r$ is the invariant density from the stationary Fokker--Planck--Kolmogorov equation. Assuming \eqref{centering}, it is a classical result in homogenization theory that the solution $u^{\varepsilon}$ of \eqref{Leps} converges weakly in $H^1(\Omega)$ to the solution $u$ of the homogenized problem
\begin{equation*}
\begin{aligned}
    \overline{\mathcal{L}} \overline{u}: =-\overline{A}:\nabla^2 \overline{u}  &= f\quad \text{in }\Omega,\\
    \overline{u} &= g\quad \text{on }\partial \Omega,
    \end{aligned}
\end{equation*} 
where the effective diffusion matrix $\overline{A}$ is a constant, symmetric and positive definite matrix given by
\begin{align}\label{Effective diffusivity}
\overline{A} := \int_{\T^d} \left(I_d + \nabla \chi(y)\right)A(y)\left(I_d+\nabla \chi(y)\right)^{\mathrm{T}}r(y)\,\mathrm{d}y; 
\end{align} 
see, e.g., \cite{BLP11,ES08,JKO94,JZ23}. This representation of $\overline{A}$ in terms of the solution to the cell problem \eqref{cell_problem_b_bar} is called the Eulerian framework for the effective diffusivity. 

We refer to \cite{CSS20,GST22,GTY20,KL16,Spr24,ST21} for optimal rates in periodic homogenization, and to \cite{AFL22,AL17,AS14,GT23,GT22} for stochastic homogenization of nondivergence-form equations. Regarding the numerical homogenization of nondivergence-form equations, recent work has led to significant developments with \cite{CSS20,FGP24,Spr24} addressing linear problems and \cite{GSS21,KS22,QST24} focusing on Hamilton--Jacobi--Bellman equations. For divergence-form equations with large drift, we refer to \cite{BLL24,BFP24,HO10,LLM17,LPS18,ZC23}.

In this paper, our objective is to develop and rigorously analyze a probabilistic numerical method to approximate the density of the invariant measure $r$ and the effective diffusion matrix $\overline{A}$. While finite element methods for approximating the effective diffusion matrix $\overline{A}$ have been rigorously analyzed in \cite{Spr24,SSZ25}, their computational cost becomes
expensive in high-dimensional settings. 

The homogenization theory developed in \cite{BLP11,PS08} establishes  that the stochastic process $X^\varepsilon_t$ defined in \eqref{diffusion with large drift}
converges in distribution to a Brownian motion with covariance matrix $\sqrt{2}\,\overline{A}^{\frac{1}{2}}$, that is, $X^\varepsilon_t \overset{d}{\to}\sqrt{2}\,\overline{A}^{\frac{1}{2}}W_t$ as $\varepsilon \to 0$, where $W_t$ is a standard $d$-dimensional Wiener process.  See also \cite{FP94,Gar97,HP08,LOY98,PS08} for homogenization of convection-diffusion equations via probabilistic methods. This probabilistic perspective motivates our particle-based approach, which exploits the Lagrangian representation of the effective diffusivity, i.e.,
\begin{equation*}
    \overline{A}^L := \lim\limits_{T \to \infty}\frac{\mathrm{Var}(X_T)}{2T},
\end{equation*}
where the covariance tensor is defined as $\mathrm{Var}(X_T) := \mathbb{E}\left[\left(X_T - \mathbb{E}[X_T]\right)\otimes\left(X_T - \mathbb{E}[X_T]\right)\right]$ and $X_t$ is the 
diffusion process from \eqref{rescaled process}. This formulation naturally circumvents the curse of dimensionality through Monte Carlo sampling.

The Monte Carlo approach for computing the effective diffusivity has been successfully employed in various settings. In particular, \cite{PSB06} developed a Monte Carlo method to approximate the effective diffusivity of inertial particles. For divergence-free velocity fields, \cite{PSZ09} implemented a stochastic splitting method that demonstrated superior performance compared to standard Euler--Maruyama discretizations. This advantage was further corroborated by \cite{WXZ18}, in which a structure-preserving symplectic scheme was introduced. Both studies found that the Euler--Maruyama scheme fails to accurately capture effective diffusivities, especially in the vanishing molecular diffusivity regime, i.e., as the diffusion coefficient tends to zero.

We will establish a rigorous connection between the Lagrangian and Eulerian formulations for computing the effective diffusivity. By the It\^{o} formula, it is proved that a universal diffusion process with sufficiently regular drift and diffusion terms still preserves the cell problem
\begin{align*}
    \mathcal{L}\chi' = b - \bar{b}\quad\text{in }\mathbb{T}^d, \qquad \int_{\mathbb{T}^d}\chi'(y)\, \mathrm{d}y = 0,
\end{align*}
and the Eulerian form limit
 \begin{equation*}
    \lim_{T\to \infty}\frac{\mathrm{Var}(X_T)}{2T}  =\int_{\T^d} \left(I_d + \nabla \chi'(y)\right)A(y)\left(I_d+\nabla \chi'(y)\right)^{\mathrm{T}}r(y)\,\mathrm{d}y,
\end{equation*}
where centering $\bar{b}=\int_{\mathbb{T}^d}b(y)r(y)\,\mathrm{d}y = 0$ may not hold so that the standard homogenization theorem does not apply. Our analysis also clarifies the limitations of the Euler--Maruyama method observed in \cite{PSZ09,WXZ18,WXZ21}.

To overcome these limitations, we use a Milstein scheme with a modified drift $b_h$ and a modified diffusion $\sigma_h$, i.e.,
\begin{equation*}
	\Tilde{X}_{t_{n+1}}^h = \Tilde{X}_{t_{n}}^h + b_{h}\left(\Tilde{X}_{t_{n}}^h\right)h + \sigma_{h}\left(\Tilde{X}_{t_n}^h\right)\sqrt{h}\,\xi_n + M_n\left(\Tilde{X}_{t_n}^h\right),\quad \tilde{X}_{0}^h = 0,\quad n= 1,2,\dotsc,N.
\end{equation*}
 The effective diffusivity is approximated via the empirical variance
 \begin{equation*}
 	\frac{\mathrm{Var}(\bm x_{t_N})}{2T} = \sum_{i=1}^M\frac{(\bm x^i_{t_N} -  \bar{\bm x}_{t_N})\otimes(\bm x^i_{t_N} - \bar{\bm x}_{t_N})}{2MT},
\end{equation*}
where $\bm x_{t_N} =  \left(\bm x_{t_N,[1]},\bm x_{t_N,[2]},\dotsc, \bm x_{t_N,[d]}\right)^T $ represents $M$ independent realizations of the numerical solution at time $T=Nh$ and $\bar{\bm x}_{t_N} = \frac{1}{M}\sum_{i=1}^M\bm x^i_{t_N}$ is the sample mean.  

Using backward error analysis, we obtain the error estimate of the invariant measure and the solution of the cell problem from the modified flow. Then, an approximation error for the effective diffusivity of order $\mathcal{O}\left(h^2 +\frac{1}{T} +\frac{1}{\sqrt{M}}\right)$ can be deduced from these estimates. Finally, we present numerical experiments to demonstrate the accuracy of the proposed method. The results show that our method efficiently computes effective diffusivities, even in three-dimensional settings. 
 
This paper is organized as follows. Section 2 establishes the theoretical foundations of the Lagrangian framework for homogenization, proving the equivalence between pathwise and PDE-based formulations of effective diffusivity. Building on this framework, Sections 3 and 4 develop our probabilistic numerical method featuring a modified high-order discretization scheme. Further, Section 4 provides a complete convergence analysis, deriving rigorous error estimates for both the approximation of the invariant measure and the computation of the effective diffusivity in high dimensions. Numerical validation in Section 5 demonstrates the accuracy of the method through benchmark tests and applications to convection-enhanced diffusion in chaotic flows.

\section{Theoretical Foundations of the Lagrangian Method}

We begin by analyzing the relationship between the Lagrangian and Eulerian frameworks for computing the effective diffusivity. The homogenization theory developed in \cite{BLP11,JZ23,PS08} provides a rigorous analysis of diffusion processes with large drift, while \cite{BLP11} establishes the probabilistic connection between the elliptic problem \eqref{Leps} and stochastic differential equations.

The long-time behavior of diffusion processes in periodic potentials has been studied in \cite{Cap98,Gar97}, though these results require the centering condition \eqref{centering} to hold. When implementing numerical integrators, the modified SDE obtained through backward error analysis introduces both a perturbed invariant measure and modified drift terms. This modification typically violates the centering condition, thereby preventing the standard homogenization result from being achieved. This phenomenon provides a theoretical explanation for the failure of the Euler--Maruyama scheme observed in \cite{PSZ09,WXZ18}. A detailed analysis of these effects will be presented in Section 4.

When the centering condition fails to hold, our analysis nevertheless establishes that $X_t$ converges in distribution to a Brownian motion with covariance matrix $\overline{A}^L$. 
This convergence result reveals the fundamental connection between the pathwise diffusion coefficient $\overline{A}^L$ and its Eulerian counterpart, even in non-centered systems. These theoretical findings directly motivate our computational framework, where we approximate $\overline{A}^L$ through long-time statistics of large particle ensembles, as developed in Section 3. 
 
The following theorem establishes the rigorous connection between the Lagrangian and Eulerian formulations, thereby providing the theoretical foundation for our proposed Lagrangian method. For $M\in \R^{d\times d}$, we write $\lvert M \rvert := \sqrt{M:M}$ to denote the Frobenius norm of $M$.

 \begin{theorem}\label{Relation between Lagrangian and Eulerian frameworks}
 Consider the diffusion process $X_t$ governed by the SDE 
    \begin{equation}\label{target SDE}
        \mathrm{d}X_t = b(X_t)\,\mathrm{d}t + \sigma(X_t)\,\mathrm{d}W_t,\qquad X_0 = x_0,
\end{equation}
where the drift $b:\mathbb{T}^d\rightarrow \mathbb{R}^d$ and diffusion coefficient $\sigma:\mathbb{T}^d\rightarrow \mathbb{R}^{d \times d}$ are assumed to be smooth and such that the diffusion matrix $A := \frac{1}{2}\sigma\sigma^{\mathrm{T}}$ is uniformly elliptic. Let $r:\mathbb{T}^d\rightarrow (0,\infty)$ denote the density function of the invariant measure satisfying \eqref{Invariant measure}. Let $\chi' :\mathbb{T}^d \rightarrow \mathbb{R}^d$ be the unique solution to the cell problem
    \begin{equation}\label{chip eqn}
        \mathcal{L}\chi' = b - \bar{b}\quad\text{in }\mathbb{T}^d, \qquad \int_{\mathbb{T}^d}\chi'(y)\,\mathrm{d}y = 0,
    \end{equation}
    where $\bar{b}:=\int_{\mathbb{T}^d}b(y)r(y)\,\mathrm{d}y$ (which may be nonzero). Then, there holds
     \begin{align*}
 		\bar{b}^L &:=\lim_{t\to \infty}\frac{\mathbb{E}[X_t|X_0 = x_0]- x_0}{t}  = \bar{b},\\ \overline{A}^L &:= \lim_{t\to \infty}\frac{\mathrm{Var}(X_t)}{2t}  =\int_{\mathbb{T}^d} \left(I_d + \nabla \chi'(y)\right)A(y)\left(I_d+\nabla \chi'(y)\right)^{\mathrm{T}}r(y)\,\mathrm{d}y.
 \end{align*} 
\end{theorem}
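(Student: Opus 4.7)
The plan is to extract both limits from a single martingale-plus-corrector decomposition of $X_t$, obtained by applying It\^o's formula to the corrector $\chi'$. Since the generator of \eqref{target SDE} is $-\mathcal{L}$, the cell equation \eqref{chip eqn} rewrites as $(\text{generator})\chi' = \bar{b}-b$, so It\^o applied componentwise gives $\mathrm{d}\chi'(X_t) = (\bar{b}-b(X_t))\,\mathrm{d}t + \nabla\chi'(X_t)\sigma(X_t)\,\mathrm{d}W_t$. Adding $\mathrm{d}X_t$, the $b(X_t)\,\mathrm{d}t$ drifts cancel and integration yields
\begin{equation*}
X_t - x_0 \;=\; \bar{b}\,t \;+\; \bigl(\chi'(x_0)-\chi'(X_t)\bigr) \;+\; M_t,\qquad M_t := \int_0^t \bigl(I_d + \nabla\chi'(X_s)\bigr)\sigma(X_s)\,\mathrm{d}W_s,
\end{equation*}
with $M_t$ a square-integrable martingale.

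Since $\chi'$ is smooth and periodic, it is bounded on $\mathbb{R}^d$, so taking expectation gives $\mathbb{E}[X_t]-x_0 = \bar{b}\,t + O(1)$, which immediately produces $\bar{b}^L=\bar{b}$ after dividing by $t$. For the covariance I subtract the mean,
\begin{equation*}
X_t - \mathbb{E}[X_t] \;=\; M_t \;-\; \bigl(\chi'(X_t)-\mathbb{E}[\chi'(X_t)]\bigr),
\end{equation*}
and expand $\mathrm{Var}(X_t)$ as a tensor product. The pure $\chi'$ self-term is $O(1)$; the cross-term is $O(\sqrt{t})$ by Cauchy--Schwarz using $\mathbb{E}|M_t|^2 = O(t)$ together with boundedness of $\chi'$; only $\mathbb{E}[M_t\otimes M_t]$ survives after division by $2t$. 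By the It\^o isometry and $\sigma\sigma^{\mathrm{T}}=2A$,
\begin{equation*}
\frac{\mathbb{E}[M_t\otimes M_t]}{2t} \;=\; \frac{1}{t}\int_0^t \mathbb{E}\bigl[(I_d+\nabla\chi'(X_s))A(X_s)(I_d+\nabla\chi'(X_s))^{\mathrm{T}}\bigr]\,\mathrm{d}s.
\end{equation*}

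The limit is then a Ces\`aro ergodic average: applying \eqref{Invariant measure} entry-by-entry to the smooth periodic matrix $F := (I_d+\nabla\chi')A(I_d+\nabla\chi')^{\mathrm{T}}$ gives $|\mathbb{E}[F(X_s)\mid X_0=x_0] - \int_{\mathbb{T}^d}Fr| \leq K'e^{-\rho s}$, so the time-average converges to $\int_{\mathbb{T}^d} F(y)r(y)\,\mathrm{d}y$ with rate $O(1/t)$, producing the claimed formula for $\overline{A}^L$. The main subtlety is conceptual rather than technical: classical homogenization would require $\bar{b}=0$ to make the cell problem $\mathcal{L}\chi=b$ solvable, but replacing the right-hand side by $b-\bar{b}$ restores solvability via the Fredholm alternative, and it is precisely this shift that simultaneously accounts for the ballistic drift $\bar{b}\,t$ in the mean and the diffusive correction through $\nabla\chi'$ in the covariance, thereby unifying the centered and non-centered regimes.
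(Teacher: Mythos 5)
Your proposal is correct and follows essentially the same route as the paper: the same It\^o decomposition $X_t - x_0 = \bar b\,t + (\chi'(x_0)-\chi'(X_t)) + M_t$, expectation plus boundedness of $\chi'$ for $\bar b^L$, and the It\^o isometry combined with the exponential ergodicity \eqref{Invariant measure} applied to $F=(I_d+\nabla\chi')A(I_d+\nabla\chi')^{\mathrm{T}}$ for $\overline{A}^L$. The only point of divergence is the cross term in the variance expansion, which you bound by Cauchy--Schwarz as $O(\sqrt{t})$ (yielding an $O(1/\sqrt{t})$ contribution after division by $2t$), whereas the paper asserts its expectation vanishes by a martingale argument and thereby claims an overall $O(1/t)$ rate; your treatment is the more cautious one and still delivers the stated limit.
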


\begin{proof}
     Applying the It\^{o} formula to $\chi'$ and using \eqref{chip eqn}, we obtain that
    \begin{align*}
            \chi'(X_t) &= \chi'(x_0) +\int_0^t \nabla \chi'(X_s)\, \sigma (X_s)\,\mathrm{d}W_s - \int_0^t\mathcal{L}\chi'(X_s)\, \mathrm{d}s\\
            &= \chi'(x_0) +\int_0^t \nabla \chi'(X_s)\, \sigma (X_s)\,\mathrm{d}W_s - \int_0^t \left(b(X_s)-\bar{b}\right) \mathrm{d}s.
    \end{align*}  
    In view of the SDE \eqref{target SDE}, we deduce that 
    \begin{equation}\label{X_t_chi}    
        X_t = x_0 - \left(\chi'(X_t) - \chi'(x_0)\right) + t\, \bar{b} + 
 \int_0^t \left(I_d + \nabla \chi'(X_s) \right) \sigma(X_s)\,\mathrm{d} W_s.
        \end{equation}
 Taking expectations on both sides, noting that $\int_0^t \left(I_d + \nabla \chi'(X_s) \right) \sigma(X_s)\,\mathrm{d} W_s$ is a martingale whose expectation vanishes, and noting that $\chi'\in L^{\infty}(\mathbb{T}^d)$ by elliptic regularity theory and Sobolev embeddings, we obtain that
\begin{equation}\label{moment_est}
        \left|\frac{\mathbb{E}_{x_0}[X_t] -x_0}{t} - \bar{b}\right| \le  \frac{\left|\mathbb{E}_{x_0}[\chi'(X_t)] - \chi'(x_0)\right|}{t}\le  \frac{ 2\|\chi'\|_{\infty}}{t},
\end{equation}
where $\mathbb{E}_{x_0}[X_t]:=\mathbb{E}[X_t|X_0 = x_0]$. Hence, we find that
\begin{equation*}
    \lim_{t\rightarrow \infty}\left|\frac{\mathbb{E}_{x_0}[X_t] -x_0}{t} - \bar{b}\right| = 0.
\end{equation*}
Meanwhile, writing 
\begin{align*}
    \overline{A}^L := \int_{\mathbb{T}^d} \left(I_d + \nabla \chi'(y)\right)A(y)\left(I_d+\nabla \chi'(y)\right)^{\mathrm{T}}r(y)\,\mathrm{d}y,
\end{align*}
taking the variance of the term $\int_0^t \left(I_d + \nabla \chi'(X_s)\right) \sigma(X_s)\,\mathrm{d} W_s$ from \eqref{X_t_chi}, and using Fubini's theorem yields that
\begin{align*}
        \alpha_t &:=\frac{1}{2t}\,\mathrm{Var}\left[\int_0^t \left(I_d + \nabla \chi'(X_s)\right)\sigma(X_s)\,\mathrm{d}W_s\right] - \overline{A}^L \\&\;= \frac{1}{t}\,\mathbb{E}_{x_0}\left[\int_0^t \left(I_d + \nabla \chi'(X_s)\right)A(X_s)\left(I_d + \nabla \chi'(X_s)\right)^{\mathrm{T}}\,\mathrm{d}s\right] - \overline{A}^L\\
        &\;= \frac{1}{t}\int_0^t\int_{\mathbb{T}^d} \left(I_d + \nabla \chi'(y) \right)A(y) \left(I_d + \nabla \chi'(y) \right)^{\mathrm{T}}r_s(y) \,\mathrm{d}y\, \mathrm{d}s - \overline{A}^L\\
        &\;= \frac{1}{t} \int_0^t\int_{\mathbb{T}^d} \left(I_d + \nabla \chi'(y) \right)A(y) \left(I_d + \nabla \chi'(y) \right)^{\mathrm{T}}\left(r_s(y)-r(y)\right) \mathrm{d}y\, \mathrm{d}s,
\end{align*}
where $A:=\frac{1}{2}\sigma\sigma^{\mathrm{T}}$ and $r_s$ denotes the probability density of $X_s$. In view of \eqref{Invariant measure}, we deduce that there exist constants $K,\rho>0$ such that
\begin{align}\label{bd on at}
    \lvert \alpha_t \rvert \leq \frac{1}{t}\left|\int_0^t K \mathrm{e}^{-\rho s}\,\mathrm{d}s\right| \le \frac{K}{\rho t}(1 - \mathrm{e}^{-\rho t}) \leq \frac{K}{\rho t}.
\end{align}
Then, we use \eqref{X_t_chi} and \eqref{bd on at} to find that
\begin{align}\label{rate 1overt}
\begin{split}
        &\left\lvert\frac{\mathrm{Var}(X_t)}{2t} - \overline{A}^L \right\rvert \\ &\le   
  \frac{\left\lvert \mathrm{Var}\left( \chi'(X_t)\right)\right\rvert}{2t} + \frac{1}{t} \left\lvert \mathbb{E}_{x_0}\hspace{-0.15cm}\left[\left(\int_0^t\left(I_d + \nabla \chi'(X_s)\right)\sigma(X_s)\,\mathrm{d}W_s\hspace{-0.1cm}\right) \hspace{-0.1cm}\left(\chi'(X_t) - \mathbb{E}_{x_0}[\chi'(X_t)]\right)^{\mathrm{T}} \right]\right\rvert + \lvert \alpha_t \rvert \\
        &\le \frac{2\|\chi'\|^2_\infty}{t} + \frac{K}{\rho t},
        \end{split}
    \end{align}
where we used in the final step that $\left(\int_0^t\left(I_d + \nabla \chi'(X_s)\right)\sigma(X_s)\,\mathrm{d}W_s\right)\left(\chi'(X_t) - \mathbb{E}_{x_0}[\chi'(X_t)]\right)^{\mathrm{T}}$ is a martingale whose expectation vanishes.

Taking the limit as $t\rightarrow \infty$, we conclude that
\begin{align*}
    \lim_{t\rightarrow \infty} \left\lvert\frac{\mathrm{Var}(X_t)}{2t} - \overline{A}^L \right\rvert = 0,
\end{align*}
which completes the proof.
\end{proof} 

\begin{remark} 
    While existing results  \cite{HO11,JZ23,PS05,JKO94} establish that the failure of the centering condition \eqref{centering} generally precludes standard homogenization results, our analysis reveals that the rescaled SDE framework nevertheless maintains convergence to well-defined Eulerian limits. This persistence of structure in the Lagrangian formulation is useful for the development of numerical approximation schemes.
\end{remark}

\section{Development of Numerical Schemes}

In this section, we present a numerical framework for computing the effective diffusivity through stochastic simulation of the rescaled SDE \eqref{rescaled SDE}. Our approach integrates high-order discretization schemes with Monte Carlo estimation, utilizing enhanced versions of the Euler--Maruyama and Milstein methods \cite{MT21} improved by the modified equation approach \cite{ACV12} to achieve high-order accuracy in solving the underlying SDEs. 

\subsection{One-step numerical integrators}

For the numerical approximation of the diffusion process $X_t$ in \eqref{target SDE}, we implement a one-step numerical integrator defined recursively by
\begin{equation}\label{numerical integrator}
	X^h_{t_{n}} = \Phi^{b,\sigma}(X^h_{t_{n-1}}, h, \zeta_{n-1}), \quad t_n = nh \in (0,T], \quad n=1,\dots,N,
\end{equation}
where $h > 0$ denotes the fixed time-step size, $\{\zeta_n\}_{n=0}^{N-1}$ are i.i.d. $d$-dimensional random vectors, and $\Phi^{b,\sigma}: \mathbb{R}^d \times (0,\infty) \times \mathbb{R}^d \to \mathbb{R}^d$ is the flow map that incorporates both drift $b$ and diffusion $\sigma$ terms. The numerical approximations $X^h_{t_n}$ correspond to the exact solution at times $t_n  = nh\in (0,T]$, $n= 1,2,...,N$. The accuracy of the approximation can be measured in terms of the weak order of the numerical integrator defined as follows.

\begin{definition}\cite{MT21}
    A numerical integrator \eqref{numerical integrator} starting from the exact initial condition $X_0^h = X_0$ is said to have weak order $p$ for the approximation of $X_{t}$ if for all test functions $f \in C_P^{\infty}(\mathbb{R}^d)$ there holds
    \begin{equation*}
        \left|\mathbb{E}\left[f\left(X_{t_n}^h\right) - \mathbb{E}[f(X_{t_n})]|X_0 = x\right]\right| = \mathcal{O}( h^p),\qquad 0\leq t_n = nh\leq T.
    \end{equation*}    
\end{definition}

Here, we used the notation $C_P^{\infty}(\mathbb{R}^d)$ to denote the class of smooth functions that together with their partial derivatives are of polynomial growth.

Two of the most widely used approaches to approximate the stochastic process are the Euler--Maruyama scheme 
\begin{equation*}
    X_{t_{n+1}}^h = X_{t_{n}}^h +b\left(X_{t_{n}}^h\right)h + \sigma\left(X_{t_{n}}^h\right)\sqrt{h}\,\xi_n, \quad n =1,2,\dotsc,N,
\end{equation*}
and the Milstein scheme 
\begin{equation}\label{Milstein scheme}
    X_{t_{n+1}}^h = X_{t_{n}}^h +b\left(X_{t_{n}}^h\right)h + \sigma\left(X_{t_{n}}^h\right)\sqrt{h}\,\xi_n  + M_n\left(X_{t_n}^h\right), \quad n =1,2,\dotsc,N,
\end{equation}
where $\xi_n\sim \mathcal{N}(0,I_d)$ is a standard $d$-dimensional normal random vector, and $M_n(X_{t_n}^h)$ denotes the Milstein correction term. It is known that both the Euler--Maruyama and Milstein method have first-order convergence in the weak sense \cite{MT21}.

In the works \cite{ACV12,AVZ14,Zyg11}, the idea behind the modified equation is that for a numerical integrator \eqref{numerical integrator} with order $p$ for the invariant measure of the SDE \eqref{rescaled process}, we can construct a modified SDE
\begin{equation}\label{modified SDE}
    \mathrm{d}\Tilde{X}_t = b_h(\Tilde{X}_t)\, \mathrm{d}t + \sigma_h(\Tilde{X}_t) \,\mathrm{d}W_t,\quad \Tilde{X}_0 = x,
\end{equation}
where $b_h$ and $\sigma_h$ admit asymptotic expansions of the form 
\begin{equation}\label{modified coefficients}
    b_h = b + h^pb_p + \dotsc + h^{p +m-1}b_{p+m-1},\qquad \sigma_h = \sigma + h^p\sigma_p + \dotsc + h^{p+m-1}\sigma_{p+m-1}
\end{equation}
with $m\in \mathbb{N}$. When the numerical integrator \eqref{numerical integrator} is applied to this modified SDE \eqref{modified SDE}, i.e.,
\begin{equation*}
    \Tilde{X}^h_{t_n} = \Phi^{b_h,\sigma_h}\left(\Tilde{X}^h_{t_{n-1}},h,\zeta_{n-1}\right),\quad n = 1,2,\dotsc,N,
\end{equation*}
the resulting approximation achieves weak order $p+m$. For a rigorous analysis and construction of modified equations, we refer to \cite{ACV12,AVZ14,Zyg11}. By employing the methodology of modified equations, we can approximate $X_t$ with a higher convergence rate.

\subsection{Improved Milstein scheme based on the modified equation approach}

We propose an improved Milstein scheme constructed via the modified equation approach, which achieves second-order weak convergence while preserving the computational efficiency of the classical method. Specifically, the second-order numerical integrator based on the Milstein scheme takes the form 
\begin{equation}\label{Euler Maruyama with modified equation}
    \Tilde{X}_{t_{n+1}}^h = \Tilde{X}_{t_{n}}^h + b_{h}\left(\Tilde{X}_{t_{n}}^h\right)h + \sigma_{h}\left(\Tilde{X}_{t_n}^h\right)\sqrt{h}\,\xi_n + M_n\left(\Tilde{X}_{t_n}^h\right),\quad \tilde{X}_{0}^h = 0,\quad n= 1,2,\dotsc,N,
\end{equation}
where 
$\xi_n \sim \mathcal{N}(0,I_d)$ is a $d$-dimensional standard Gaussian random  vector, and the Milstein correction term $M_n\left(\Tilde{X}_{t_{n}}^h\right) = \left(M_{n,[i]}\left(\Tilde{X}_{t_{n}}^h\right)\right)_{i=1}^d \in \mathbb{R}^d$ is defined component-wise by
\begin{equation*}
    M_{n,[i]}(\Tilde{X}_{t_n}^h)  = \Xi_i(\Tilde{X}_{t_n}^h) : J_n = \sum_{j_1,j_2=1}^d \Xi_{i,[j_1,j_2]}J_n^{[j_1,j_2]},\quad i = 1,2,\dotsc,d.
\end{equation*}
Here, the coefficient matrix $\Xi_i = (\Xi_{i,[j_1,j_2]})_{1\leq j_1,j_2\leq d}$ is the $d\times d$ matrix given by
\begin{equation*}
    \Xi_{i,[j_1,j_2]} = \sum_{k=1}^d \partial_k\sigma_{[i,j_2]}\sigma^{[k,j_1]},\quad i,j_1,j_2 = 1,2,\dotsc,d,
\end{equation*}
and the double It\^{o} integral $J_n = (J_n^{[j_1,j_2]})_{1\leq j_1,j_2\leq d}$ is defined as the $d \times d$ matrix with entries  
\begin{equation*}
    J_n^{[j_1,j_2]} =  \int_{t_n}^{t_{n+1}}\left(\int_{t_n}^s \mathrm{d}W^{j_1}_t\right) \mathrm{d}W^{j_2}_s,\quad j_1,j_2 = 1,2,\dotsc,d.
\end{equation*}
The integral matrix $J_n$ is difficult to evaluate in general. However, under the commutativity condition
\begin{equation*}
	\Xi_{i,[j_1,j_2]} = \Xi_{i,[j_2,j_1]} \quad \forall i,j_1,j_2 \in \{1,\dots,d\},
\end{equation*}
the matrix $J_n$ can be approximated by
\begin{equation*}
 \frac{h}{2}\left(\xi_n^{j_1}\xi_n^{j_2} - \delta_{j_1,j_2}\right), \quad j_1,j_2 = 1,\dots,d,
\end{equation*}
where $\xi_n \sim \mathcal{N}(0,I_d)$ is a $d$-dimensional standard Gaussian random vector and $\delta_{j_1,j_2}$ is the Kronecker delta. 

For the non-commutative case, we approximate $J_n$ using a truncated multiple Fourier--Legendre series of order $q$, i.e., 
\begin{equation*}
    J_n^{q,[j_1,j_2]} = \frac{h}{2}\left(\xi^{j_1}_n\xi_n^{j_2} + \sum_{k=1}^q\frac{1}{\sqrt{4k^2 -1}}(\xi^{(k-1),j_1}_n\xi_n^{(k),j_2} - \xi^{(k),j_1}_n\xi_n^{(k-1),j_2}) - \delta_{j_1,j_2}\right),
\end{equation*}
where $\xi_n^{(k)}\sim \mathcal{N}(0,I_d)$ are i.i.d. standard normal random vectors with $\xi_n^{(0)} = \xi_n$. Without altering the weak convergence rate, we can simply take $q = 2$. See Theorem 2 in \cite{Kuz20} for more details.

The modified equation coefficients $b_{h}$ and $\sigma_{h}$ are expanded as 
\begin{equation*}
	b_h = b + h\,b_1, \quad \sigma_h = \sigma + h\,\sigma_1,
\end{equation*}
where the first-order corrections $b_1 = (b_{1,[i]})_{1\leq i\leq d}$ and $\sigma_1 = (\sigma_{1,[j_1,j_2]})_{1\leq j_1,j_2\leq d}$ are given by
\begin{equation}\label{modified equation}
    \begin{aligned}
    b_{1,[i]} &= \frac{1}{2}b\cdot \nabla b_{[i]} + \frac{1}{4} (\sigma\sigma^T):\nabla^2 b_{[i]},\qquad i=1,\dots,d,\\
    \sigma_{1,[j_1,j_2]} &= \frac{1}{2} \left((\nabla b)\sigma\right)_{[j_1,j_2]} + \frac{1}{2}b\cdot \nabla \sigma_{[j_1,j_2]} + \frac{1}{4}(\sigma\sigma^T): \nabla^2 \sigma_{[j_1,j_2]},\qquad j_1,j_2=1,\dots,d,
    \end{aligned}   
\end{equation}
This is a specific case of the $\theta$-Milstein method provided in \cite{ACV12}.
More precisely, the first-order correction terms $b_1$ and $\sigma_1$ can be expressed using Einstein summation convention as follows:
 \begin{equation*}
    \begin{aligned}
        b_{1,[i]} &= \frac{1}{2}\partial_{k}b_{[i]}b^{[k]} + \frac{1}{4}(\sigma\sigma^T)_{[k_1,k_2]}\partial^{k_1}\partial^{k_2}b_{[i]},\\
        \sigma_{1,[j_1,j_2]} &= \frac{1}{2}\sigma_{[k,j_2]}\partial^k b_{[j_1]}  +\frac{1}{2} \partial_k\sigma_{[j_1,j_2]}b^{[k]} +\frac{1}{4}(\sigma\sigma^T)_{[k_1,k_2]}\partial^{k_1}\partial^{k_2}\sigma_{[j_1,j_2]}.
\end{aligned}     
 \end{equation*} 

In Section 5, we will employ the improved Milstein scheme, constructed using the modified equation approach, to numerically approximate invariant measures and effective diffusivities for SDEs associated with nondivergence-form elliptic equations involving large drift terms.

\section{Convergence analysis}  
\noindent

In this section, we establish error estimates for the proposed numerical scheme in computing both the invariant measure and solutions to the cell problem through backward error analysis. Furthermore, we derive approximation error bounds for the effective diffusivity in the associated SDEs.

\subsection{Error analysis for the invariant measure}  

We analyze the weak convergence properties of the first modified equation, following \cite{ACV12}. Consider the backward Kolmogorov equation associated with \eqref{rescaled process}, i.e.,
\begin{equation}\label{backward Komogorov equation}
    \frac{\partial u}{\partial t} = \mathcal{L}u, \qquad u(x,0) = \phi(x),
\end{equation}
where $\phi\in C^\infty(\mathbb{T}^d)$ is an arbitrary test function. With $X_t$ denoting the solution to \eqref{rescaled SDE}, we have that the solution to \eqref{backward Komogorov equation} is given by $u(x,t) = u^{b,\sigma}(\phi,x,t)$, where
\begin{equation*}
u^{b,\sigma}(\phi,x,t) = \mathbb{E}\left[\phi(X_t)\,|\,X_0 = x\right].
\end{equation*}
Since $b,\sigma$, and $\phi$ are smooth functions, $u^{b,\sigma}$ admits the formal Taylor expansion
\begin{equation*}
    u^{b,\sigma}(\phi,x,h) = \phi(x) + \sum_{j=1}^{\infty} \frac{h^j}{j!}\mathcal{L}^j\phi(x).
\end{equation*}
For the numerical integrator \eqref{numerical integrator}, we define the numerical solution to \eqref{backward Komogorov equation} as  
\begin{equation}\label{numerical solution to backward Kolmogrov equation}
	U^{b,\sigma}(\phi,x,h) = \mathbb{E}\left[\phi(X_{t_1}^h)\,|\,X_0 = x\right].
\end{equation}
We make the following assumption about the numerical solution.
\begin{assumption}\label{Assumption 1}
    The numerical solution \eqref{numerical solution to backward Kolmogrov equation} 
    admits a weak Taylor expansion of the form
    \begin{equation*}
    U^{b,\sigma}(\phi,x,h) = \phi(x) + h\, \mathcal{L}\phi(x)+ \sum_{j=1}^{\infty}h^{j+1}\mathcal{A}_j^{b,\sigma}\phi(x)        ,
    \end{equation*}
    where $\mathcal{A}_j^{b,\sigma}$, $j \in \mathbb{N}$, are numerical operators depending on the coefficients of the SDE \eqref{rescaled SDE}.
\end{assumption}
The following result is a special case of \cite[Theorem 2.1]{ACV12}.

\begin{proposition}\label{Prop: order two}
    Consider a first weak-order numerical integrator \eqref{numerical integrator} satisfying Assumption \ref{Assumption 1}. If there exist modified coefficients $b_{h} = b + h\,b_1 \in C^{\infty}(\mathbb{T}^d;\mathbb{R}^d)$ and $\sigma_{h} = \sigma + h\,\sigma_1\in C^{\infty}(\mathbb{T}^d;\mathbb{R}^{d\times d})$ as in \eqref{modified coefficients} such that the differential operator 
    \begin{equation*}
        \mathcal{L}_{1} := \lim_{h \to 0}\frac{u^{b,\sigma}(\cdot,x,h)- U^{b_h,\sigma_h}(\cdot,x,h)}{h^2}
    \end{equation*}
    can be expressed in the form
    \begin{equation}\label{L_1 form}
        \mathcal{L}_1 =  \frac{1}{2}(\sigma \sigma_1^{\mathrm{T}} +\sigma_1 \sigma^{\mathrm{T}}):\nabla^2 + b_1\cdot \nabla,
    \end{equation}
    then the numerical integrator with the modified equation
    \begin{equation}\label{numerical integrator with modifeid equation}
        \Tilde{X}^h_{t_n+1} = \Phi_{b_h,\sigma_h}(\Tilde{X}^h_{t_n},h,\xi_n)
    \end{equation}
   achieves second-order weak convergence for the original SDE. Specifically, for any test function $f\in C_P^{\infty}(\mathbb{R}^d)$,  sufficiently small time-step $h>0$, and any fixed time $t_n =nh$, we have that
    \begin{equation*}
        \left|\mathbb{E}[f(\Tilde{X}^h_{t_n})] - \mathbb{E}[f(X_{t_n})]\right| \le C h^{2}.
    \end{equation*}
\end{proposition}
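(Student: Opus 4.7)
The plan is to carry out a backward error analysis in the spirit of \cite{ACV12}, comparing the formal Taylor expansion in $h$ of the exact semigroup of the original SDE with that of the numerical flow applied to the modified SDE. Since $b,\sigma$ are smooth, the exact solution of the backward Kolmogorov equation admits the expansion
\begin{equation*}
u^{b,\sigma}(\phi,x,h) = \phi(x) + h\,\mathcal{L}\phi(x) + \tfrac{h^2}{2}\mathcal{L}^2\phi(x) + \mathcal{O}(h^3),
\end{equation*}
while by Assumption 4.1 applied with the modified coefficients $(b_h,\sigma_h)$, the one-step numerical flow satisfies
\begin{equation*}
U^{b_h,\sigma_h}(\phi,x,h) = \phi(x) + h\,\mathcal{L}^{b_h,\sigma_h}\phi(x) + h^2\,\mathcal{A}_1^{b_h,\sigma_h}\phi(x) + \mathcal{O}(h^3),
\end{equation*}
where $\mathcal{L}^{b_h,\sigma_h}$ denotes the infinitesimal generator of the modified SDE.

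Next I would substitute $b_h = b + hb_1$ and $\sigma_h = \sigma + h\sigma_1$ and expand each $h$-dependent operator. A direct expansion of $\tfrac{1}{2}\sigma_h\sigma_h^{\mathrm{T}}$ and $b_h$ gives
\begin{equation*}
\mathcal{L}^{b_h,\sigma_h} = \mathcal{L} + h\left[\tfrac{1}{2}(\sigma\sigma_1^{\mathrm{T}} + \sigma_1\sigma^{\mathrm{T}}):\nabla^2 + b_1\cdot\nabla\right] + \mathcal{O}(h^2),
\end{equation*}
and by smooth dependence of the numerical operators on their coefficients, $\mathcal{A}_1^{b_h,\sigma_h} = \mathcal{A}_1^{b,\sigma} + \mathcal{O}(h)$. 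Subtracting the two expansions and dividing by $h^2$, the leading-order discrepancy is precisely the differential operator denoted $\mathcal{L}_1$ in the proposition. The hypothesis \eqref{L_1 form} then says that this leading error coincides with the first-order generator modification produced by $b_1,\sigma_1$, so that the corresponding contributions cancel in the one-step expansion, leaving a local truncation error of order $\mathcal{O}(h^3)$.

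Finally, to lift this one-step estimate to a global second-order weak error, I would use the standard Talay--Tubaro telescoping argument. Writing
\begin{equation*}
\mathbb{E}[f(X_{t_n})] - \mathbb{E}[f(\tilde X^h_{t_n})] = \sum_{k=0}^{n-1} \mathbb{E}\bigl[u^{b,\sigma}(f,\tilde X^h_{t_k}, t_n-t_k) - u^{b,\sigma}(f,\tilde X^h_{t_{k+1}}, t_n-t_{k+1})\bigr],
\end{equation*}
each summand is bounded by the local $\mathcal{O}(h^3)$ estimate combined with uniform polynomial-growth bounds on the derivatives of $(x,t)\mapsto \mathbb{E}[f(X_t)\mid X_0 = x]$ for $f\in C_P^{\infty}(\mathbb{R}^d)$; these are standard consequences of the smoothness of the coefficients and the compactness of the torus. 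Summing $n = T/h$ such contributions yields the claimed bound $\lvert \mathbb{E}[f(\tilde X^h_{t_n})] - \mathbb{E}[f(X_{t_n})] \rvert \leq C h^2$.

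The main obstacle I anticipate is the careful algebraic bookkeeping in the second step: one must track that no stray $h^2$ contributions slip in from the $\mathcal{O}(h^2)$ remainders of $\mathcal{L}^{b_h,\sigma_h}$ and $\mathcal{A}_1^{b_h,\sigma_h}$, and one must verify that the higher-order terms $h^{j+1}\mathcal{A}_j^{b_h,\sigma_h}$ for $j\geq 2$ in the expansion coming from Assumption 4.1 contribute only at $\mathcal{O}(h^3)$. This bookkeeping is what makes the form constraint \eqref{L_1 form} both necessary and sufficient for the second-order cancellation; once it is verified, the telescoping-to-global argument is comparatively standard, provided the semigroup regularity estimates from \cite{ACV12} are invoked.
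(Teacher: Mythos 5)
The paper offers no proof of this proposition at all --- it is stated as ``a special case of \cite[Theorem 2.1]{ACV12}'' --- so there is no in-paper argument to compare against. Your sketch reconstructs the standard backward-error-analysis proof of that cited result: expand the exact semigroup and the one-step numerical map to order $h^2$, use the form constraint to cancel the $h^2$ defect, and telescope \`a la Talay--Tubaro. The overall structure is the right one, and the telescoping step (requiring uniform moment bounds on the numerical iterates and polynomial-growth bounds on derivatives of $(t,x)\mapsto\mathbb{E}[f(X_t)\mid X_0=x]$, unproblematic on the torus with smooth coefficients) is handled adequately.

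There is, however, one step where you assert the key cancellation rather than verify it, and under the literal reading of the hypothesis it would fail. Write $\mathcal{D} := \tfrac{1}{2}(\sigma\sigma_1^{\mathrm{T}}+\sigma_1\sigma^{\mathrm{T}}):\nabla^2 + b_1\cdot\nabla$ for the generator perturbation. Your own expansions give
\begin{equation*}
u^{b,\sigma}(\phi,x,h) - U^{b_h,\sigma_h}(\phi,x,h) = h^2\Bigl(\tfrac{1}{2}\mathcal{L}^2 - \mathcal{A}_1^{b,\sigma} - \mathcal{D}\Bigr)\phi(x) + \mathcal{O}(h^3),
\end{equation*}
so the limit operator defined in the proposition is $\mathcal{L}_1 = \tfrac{1}{2}\mathcal{L}^2 - \mathcal{A}_1^{b,\sigma} - \mathcal{D}$. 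An $\mathcal{O}(h^3)$ local error --- which is what the global $\mathcal{O}(h^2)$ bound requires --- needs $\mathcal{L}_1 = 0$, i.e.\ $\tfrac{1}{2}\mathcal{L}^2 - \mathcal{A}_1^{b,\sigma} = \mathcal{D}$. The hypothesis \eqref{L_1 form}, taken literally, instead asserts $\mathcal{L}_1 = \mathcal{D}$, which forces $\tfrac{1}{2}\mathcal{L}^2 - \mathcal{A}_1^{b,\sigma} = 2\mathcal{D}$ and leaves a residual $h^2\mathcal{D}\phi$ in the one-step error, hence only first-order global convergence. The resolution is that in \cite{ACV12} the operator $\mathcal{L}_1$ is defined from the \emph{unmodified} integrator, $\mathcal{L}_1 = \lim_{h\to 0} h^{-2}\bigl(u^{b,\sigma}(\cdot,x,h) - U^{b,\sigma}(\cdot,x,h)\bigr) = \tfrac{1}{2}\mathcal{L}^2 - \mathcal{A}_1^{b,\sigma}$, for which the condition $\mathcal{L}_1 = \mathcal{D}$ is exactly the required cancellation. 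You need to make this identification explicit (or correct the definition of $\mathcal{L}_1$ you are working with); as written, the sentence ``the hypothesis then says\dots the corresponding contributions cancel'' silently substitutes the condition you need for the condition you were given. With that repair, the rest of the argument is sound.
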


For the Milstein scheme \eqref{Milstein scheme}, one can verify that the coefficients $b_h$ and $\sigma_h$ defined in \eqref{modified equation} satisfy the operator form \eqref{L_1 form}. Consequently, the modified scheme \eqref{Euler Maruyama with modified equation} achieves second-order convergence in the weak sense.

\begin{remark}
   In the case of non-constant $\sigma$, the operator $\mathcal{L}_1$ associated with the Euler--Maruyama scheme contains third-order derivative terms. This prevents its expression in the form \eqref{L_1 form}. Therefore, a modified integrator cannot be constructed for the Euler--Maruyama method. If we use Milstein’s method instead of the Euler method, such a derivation
of a modified It\^{o} SDE is possible under appropriate smoothness assumptions for the drift $b$ and the diffusion coefficient $\sigma$.
\end{remark}

\begin{corollary}[Convergence of invariant measure]\label{Convergence of invariant measure}
    Under the assumptions of Theorem \ref{Relation between Lagrangian and Eulerian frameworks}, for any test function $\phi \in C^\infty(\mathbb{T}^d)$, there exist constants $C,K,\rho>0$, such that for $t_n = nh$, the numerical solution $\Tilde{X}_{t_n}^h$ with smooth modified coefficients $b_h,\sigma_h$ satisfies
    \begin{equation*}
        \left| \mathbb{E}[\phi(\Tilde{X}^h_{t_n})] - \int_{\mathbb{T}^d} \phi(y)r(y)\, \mathrm{d}y \right| \le K e^{-\rho t} + Ch^2.
    \end{equation*}
\end{corollary}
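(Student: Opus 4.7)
The plan is to apply the triangle inequality, decomposing
\begin{equation*}
\left|\mathbb{E}[\phi(\tilde{X}^h_{t_n})] - \int_{\mathbb{T}^d} \phi(y)r(y)\,\mathrm{d}y\right| \le \left|\mathbb{E}[\phi(\tilde{X}^h_{t_n})] - \mathbb{E}[\phi(X_{t_n})]\right| + \left|\mathbb{E}[\phi(X_{t_n})] - \int_{\mathbb{T}^d}\phi(y)r(y)\,\mathrm{d}y\right|,
\end{equation*}
where $X_{t_n}$ is the exact solution of the rescaled SDE starting from the same initial condition as $\tilde{X}^h$. The second term on the right-hand side is controlled directly by the ergodicity estimate \eqref{Invariant measure}, which yields a bound of the form $K e^{-\rho t_n}$ with constants $K,\rho>0$ depending on $\phi$. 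The first term on the right-hand side is precisely the weak error of the modified Milstein integrator applied to the modified SDE, and Proposition \ref{Prop: order two} provides a bound of order $h^2$.

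The delicate point — and what I anticipate to be the main obstacle — is that Proposition \ref{Prop: order two} is stated for a fixed time horizon, so its constant may a priori grow with $t_n$; a naive telescoping of the local weak error of size $h^3$ yields an overall constant proportional to $t_n/h \cdot h^3 = t_n h^2$. To obtain a time-independent constant $C$ as claimed, I would carry out a refined telescoping identity using the backward Kolmogorov semigroup $P_t$ of the original SDE: write
\begin{equation*}
\mathbb{E}[\phi(\tilde{X}^h_{t_n})] - \mathbb{E}[\phi(X_{t_n})] = \sum_{k=0}^{n-1} \mathbb{E}\bigl[(P_{t_n - t_{k+1}}\phi)(\tilde{X}^h_{t_{k+1}}) - (P_{t_n - t_k}\phi)(\tilde{X}^h_{t_k})\bigr],
\end{equation*}
and estimate each summand by the one-step local weak error $\mathcal{O}(h^3)$ applied to the test function $P_{t_n-t_k}\phi$. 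By the exponential ergodicity of $\mathcal{L}$, the centered function $P_{t_n-t_k}\phi - \int \phi\, r\,\mathrm{d}y$ and its derivatives decay like $e^{-\rho(t_n-t_k)}$ in suitable norms; since the local error actually depends only on derivatives of the test function (a constant shift drops out), each summand is of size $h^3 e^{-\rho(t_n-t_k)}$. Summing the geometric series gives a uniform bound $C h^2$ independent of $n$, which combined with the ergodic decay above yields the stated inequality.

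The remaining technical work is to justify the derivative bounds on $P_t\phi$ uniformly in $t$, which follows from the smoothness of $b_h,\sigma_h$ and the spectral gap of $\mathcal{L}$ on the torus together with standard Schauder estimates; these are classical ingredients once the uniform ellipticity of $A$ and the smoothness of the modified coefficients in \eqref{modified equation} are in place.
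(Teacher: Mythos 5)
Your decomposition is exactly the one the paper intends: the corollary is stated without proof and follows from the triangle inequality, with the ergodicity estimate \eqref{Invariant measure} controlling $\left|\mathbb{E}[\phi(X_{t_n})] - \int_{\mathbb{T}^d}\phi\, r\,\mathrm{d}y\right|$ by $Ke^{-\rho t_n}$ and Proposition \ref{Prop: order two} controlling the weak error by $Ch^2$. Where you go beyond the paper is in flagging and resolving the time-uniformity of the constant $C$: Proposition \ref{Prop: order two} is stated for a fixed $t_n$, and the paper does not address whether $C$ degrades as $t_n\to\infty$. Your telescoping over the semigroup $P_t$, combined with the exponential decay of derivatives of the centered function $P_t\phi - \int\phi\, r\,\mathrm{d}y$, is the standard Talay--Tubaro-type argument (and is essentially how \cite{ACV12} obtains long-time weak error bounds for ergodic SDEs); summing the resulting geometric series $\sum_k h^3 e^{-\rho(t_n-t_k)}=\mathcal{O}(h^2)$ gives the uniform-in-$n$ constant that the corollary implicitly requires. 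A minor indexing remark: in the telescoping sum the $k$-th summand compares the one-step numerical and exact flows started from $\tilde{X}^h_{t_k}$ against the test function $P_{t_n-t_{k+1}}\phi$ (not $P_{t_n-t_k}\phi$), but this does not affect the estimate. Your version is therefore a more complete proof than the paper's implicit one.
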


\subsection{Error analysis for effective diffusion matrix}

The backward error analysis approach \cite{Rei99} provides an effective framework for analyzing numerical approximations. This technique modifies the original SDE \eqref{target SDE} to construct a continuous-time process $\hat{X}^h_t$ whose SDE takes the form
\begin{align*}
    \mathrm{d}\hat{X}^h_t = \hat{b}_h\,\mathrm{d}t + \hat{\sigma}_h\,\mathrm{d}W_t,
\end{align*}
that better approximates the discrete numerical solution $X^h_{t_n}$ from \eqref{numerical integrator}. Specifically, for any smooth test function $f\in C_P^{\infty}(\mathbb{R}^d)$, sufficiently small time step $h>0$, and discrete times $t_n = nh$, we obtain the error estimate
\begin{equation}\label{BAE err}
    \left|\mathbb{E}[f(X^h_{t_n}) - \mathbb{E}f(\hat{X}^h_{t_n})]\right| \le C h^{p+q},
\end{equation}
where $p$ is the base order of the numerical method, $q$ represents the order improvement, and $C >0$ is a constant independent of $h$.

For the Milstein scheme \eqref{Milstein scheme}, the first-order modified SDE in backward error analysis \cite{DF12} takes the form 
\begin{equation*}
    \mathrm{d}\hat{X}^h_t = (b - h\,b_1)\,\mathrm{d}t + (\sigma - h\,\sigma_1 )\,\mathrm{d}W_t,
\end{equation*}
where the correction terms $b_1\in C^{\infty}(\mathbb{T}^d)$ and $\sigma_1\in C^{\infty}(\mathbb{T}^d)$ are identical to those defined in \eqref{modified equation}. This means that the modified coefficients $b_{h},\sigma_{h}$ of modified equations for backward error analysis and for modified integrators of second order are identical up to the multiplicative factor $-1$; see \cite{ACV12}.
 
 Regarding $b + h\,b_1$ and $\sigma + h\,\sigma_1$ as the new $b$ and $\sigma$, we take backward error analysis for the numerical integrator with modified equation $\Tilde{X}^h_{t_n}$ \eqref{numerical integrator with modifeid equation} of the given SDE \eqref{rescaled process} which yields
\begin{equation*}
    \mathrm{d} X'_t = (b + h^2 b'_1)\,\mathrm{d}t + (\sigma + h^2\sigma_1')\,\mathrm{d}W_t,
\end{equation*}
where the second-order correction terms $b_1' = (b_{1,[i]}')_{1\leq i\leq d}$ and $\sigma_1' = (\sigma_{1,[j_1,j_2]}')_{1\leq j_1,j_2\leq d}$ are given by
\begin{align*}
    b_{1,[i]}' &= \frac{1}{2}\Big( b_1\cdot \nabla b_{[i]}  + b\cdot \nabla b_{1,[i]} \Big)+
\frac{1}{4} \Big[ (\sigma_1 \sigma^T + \sigma \sigma_1^T):\nabla^2  b_{[i]} + (\sigma \sigma^T):\nabla^2 b_{1,[i]} \Big]
\\
& +\frac{h}{2}\, b_1\cdot \nabla b_{1,[i]}
+\frac{h}{4}\Big[ (\sigma_1 \sigma^T + \sigma \sigma_1^T) :\nabla^2 b_{1,[i]} + (\sigma_1 \sigma_1^T) :\nabla^2 b_{[i]} \Big],
\end{align*}
and
\begin{align*}
    \sigma_{1,[j_1,j_2]}' &= \frac{1}{2}\Big(\nabla b_{[j_1]}\cdot \sigma_{1,[\cdot,j_2]} + \nabla b_{1,[j_1]}\cdot \sigma_{[\cdot,j_2]}\Big) +\frac{1}{2}\Big(\nabla \sigma_{[j_1,j_2]}\cdot b_1 + \nabla \sigma_{1,[j_1,j_2]}\cdot b\Big) \\&+
    \frac{1}{4}\Big((\sigma\sigma^T):\nabla^2\sigma_{1,[j_1,j_2]}+(\sigma_1\sigma^T + \sigma\sigma_1^T):\nabla^2\sigma_{[j_1.j_2]}\Big) + \frac{h}{2}\Big(\nabla b_{1,[j_1]}\cdot \sigma_{1,[\cdot,j_2]} + \nabla \sigma_{1,[j_1,j_2]}\cdot b_{1}\Big)\\
    &+ \frac{h}{4}\Big((\sigma_1\sigma^T +\sigma\sigma_1^T):\nabla^2\sigma_{1,[j_1,j_2]} + \sigma_1\sigma_1^T:\nabla^2\sigma_{[j_1,j_2]}\Big)
\end{align*}
for $i,j_1,j_2 = 1,2,\dotsc,d$. Equivalently, using Einstein's summation convention,
\begin{equation*}
\begin{aligned}
b_{1,[i]}' &= \frac{1}{2}\Big( b_{1,[k]} \partial^{k} b_{[i]}  + \partial_{k} b_{1,[i]}\, b^{[k]} \Big)+
\frac{1}{4} \Big[ (\sigma_1 \sigma^{\mathrm{T}} + \sigma \sigma_1^{\mathrm{T}})_{[k,l]} \partial^{k} \partial^{l} b_{[i]} + (\sigma \sigma^T)_{[k,l]} \partial^{k} \partial^{l} b_{1,[i]} \Big]\\
& +\frac{h}{2}\,b_{1,[k]}\partial^{k} b_{1,[i]}
+\frac{h}{4}\Big[ (\sigma_1 \sigma^{\mathrm{T}} + \sigma \sigma_1^{\mathrm{T}})_{[k,l]} \partial^{k} \partial^{l} b_{1,[i]} + (\sigma_1 \sigma_1^{\mathrm{T}})_{[k,l]} \partial^{k} \partial^{l} b_{[i]} \Big], 
\end{aligned}
\end{equation*}
and 
\begin{equation*}
    \begin{aligned}
&\sigma_{1,[j_1,j_2]}' = \frac{1}{2} \Big( \partial_k b_{[j_1]}\sigma_1^{[k,j_2]} + \partial_k b_{1,[j_1]}\sigma^{[k,j_2]} \Big)
+\frac{1}{2} \Big( \partial_k \sigma_{[j_1,j_2]} b_1^{[k]} + \partial_k \sigma_{1,[j_1,j_2]} b^{[k]} \Big)\\
&+\frac{1}{4}  \Big[ (\sigma \sigma^{\mathrm{T}})_{[k_1,k_2]} \partial^{k_1}\partial^{k_2} \sigma_{1,[j_1,j_2]}+
(\sigma_1 \sigma^{\mathrm{T}} + \sigma \sigma_1^{\mathrm{T}})_{[k_1,k_2]} \partial^{k_1}\partial^{k_2} \sigma_{[j_1,j_2]} \Big]+\frac{h}{2} \Big[ \partial_k b_{1,[j_1]} \sigma_1^{[k,j_2]} + \partial_k \sigma_{1,[j_1,j_2]} b_1^{[k]} \Big]\\
&+ \frac{h}{4} \Big( (\sigma_1 \sigma^{\mathrm{T}} + \sigma \sigma_1^{\mathrm{T}})_{[k_1,k_2]} \partial^{k_1} \partial^{k_2} \sigma_{1,[j_1,j_2]} + (\sigma_1 \sigma_1^{\mathrm{T}})_{[k_1,k_2]} \partial^{k_1} \partial^{k_2} \sigma_{[j_1,j_2]} \Big). 
\end{aligned}
\end{equation*}

By combining Proposition \ref{Prop: order two} with the backward error estimate \eqref{BAE err}, we obtain the following key estimate that for any test function $f\in C_P^{\infty}(\mathbb{R}^d)$, sufficiently small time step $h>0$, and discrete times $t_h = nh$, we have
    \begin{equation}\label{X'_t X_t err}
         \left|\mathbb{E}[f(\tilde{X}^h_{t_n}) - \mathbb{E}f(X'_{t_n})]\right| \leq C h^2.
    \end{equation}
 
This result justifies using the modified SDE \eqref{modified SDE} to approximate the effective diffusion matrix. Defining the modified coefficients
\begin{equation}\label{Ahp bhp}
		A_h' = \frac{1}{2}(\sigma + h^2\sigma_1')(\sigma + h^2\sigma_1')^{\mathrm{T}}, \qquad
		b_h' = b + h^2b_1',
\end{equation}
we obtain the associated generator and its adjoint operator 
\begin{align*}
    \mathcal{L}^{h} v := - A_h':\nabla^2 v  -b_h'\cdot \nabla v,\qquad    \left(\mathcal{L}^{h}\right)^* v  := - \nabla^2:\left(A_h'v\right)+ \nabla\cdot \left(b_h'v\right).
\end{align*}
When $h>0$ is small enough, we can preserve that $A_h'$ is still uniformly elliptic. By the Fredholm alternative, there exists a unique solution $r_h'\in L^2(\mathbb{T}^d)$ to
\begin{equation*}
    (\mathcal{L}^h)^*r_h' = 0\quad\text{in }\mathbb{T}^d,\qquad \int_{\mathbb{T}^d} r_h'(y)\,\mathrm{d}y = 1,
\end{equation*}
and there exists a unique solution $\chi_h'\in H^2(\mathbb{T}^d)$ to
\begin{equation*}
    \mathcal{L}^h \chi_h' = b_h' - \int_{\mathbb{T}^d} b_h'(y)r_h'(y)\, \mathrm{d}y \quad\text{in }\mathbb{T}^d,\qquad \int_{\mathbb{T}^d}\chi_h'(y)\,\mathrm{d}y = 0.
\end{equation*}
The function $r_h'$ is an approximation of the invariant measure $r$. The corresponding Lagrangian effective diffusivity $\overline{A}^L_h$ is given by
\begin{equation}\label{A^L_h}
    \overline{A}^L_h = \int_{\mathbb{T}^d}[I_d + \nabla \chi_h'(y)]A_h'(y)[I_d + \nabla \chi_h'(y)]^{\mathrm{T}} r_h'(y)\,\mathrm{d}y.
\end{equation}
A quick argument reveals an explicit stability bound for nondivergence-form problems with drift:
\begin{lemma}\label{chi_h' estimate}
    Let $A\in W^{1,\infty}(\T^d;\R^{d\times d}_{\mathrm{sym}})$ be a uniformly elliptic diffusion matrix with ellipticity constant $\lambda>0$, let $b\in L^{\infty}(\T^d;\R^d)$ be a drift vector field, and let $r\in C(\T^d;(0,\infty))$ denote the corresponding invariant measure satisfying $\mathcal{L}^*r=0$ in $\mathbb{T}^d$ and $\int_{\mathbb{T}^d} r(y)\,\mathrm{d}y = 1$. For a source term $z \in L^2(\mathbb{T}^d)$ with $\int_{\mathbb{T}^d} z(y)r(y)\,\mathrm{d}y = 0$, the weak solution $w\in H^1(\mathbb{T}^d)$ to 
    \begin{equation}\label{standard cell problem}
    	\mathcal{L} w := - A:\nabla^2 w  -b\cdot \nabla w  = z\quad\text{in }\T^d,\qquad \int_{\mathbb{T}^d}w(y)\,\mathrm{d}y = 0
    \end{equation} 
    satisfies the energy estimate 
       \begin{equation*}
    	\|w\|_{L^2(\T^d)} \leq \frac{1}{2\pi} \|\nabla w\|_{L^2(\T^d)} \le \frac{1}{4\pi^2 \lambda} \frac{\sup_{\T^d}r}{\inf_{\T^d}r} \|z\|_{L^2(\T^d)},\qquad \|\nabla^2 w\|_{L^2(\T^d)} \leq C \|z\|_{L^2(\T^d)}.
    \end{equation*}
\end{lemma}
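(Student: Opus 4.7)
The plan is to test the equation $\mathcal{L}w = z$ against the weighted function $rw$ and to recognize a divergence-free structure that neutralizes the first-order term. After two integrations by parts on the second-order contribution, I would obtain
\begin{equation*}
\int_{\T^d} z\,w\,r\,\mathrm{d}y = \int_{\T^d} r\,A\nabla w\cdot\nabla w\,\mathrm{d}y + \int_{\T^d} w\,V\cdot\nabla w\,\mathrm{d}y,
\end{equation*}
where $V := \nabla\cdot(rA) - rb = A\nabla r + r(\nabla\cdot A) - rb$ (using the symmetry of $A$). The crucial observation is that the invariant-measure identity $\mathcal{L}^*r = -\nabla^2:(Ar) + \nabla\cdot(br) = -\nabla\cdot V = 0$ means that $V$ is divergence-free on $\T^d$. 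Since $wV\cdot\nabla w = \frac{1}{2}V\cdot\nabla(w^2)$, integrating by parts shows the mixed term vanishes.

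What remains is the weighted coercivity chain
\begin{equation*}
\lambda\,(\inf_{\T^d}r)\|\nabla w\|_{L^2}^2 \le \int_{\T^d} r\,A\nabla w\cdot\nabla w\,\mathrm{d}y = \int_{\T^d} z\,w\,r\,\mathrm{d}y \le (\sup_{\T^d}r)\,\|z\|_{L^2}\|w\|_{L^2}.
\end{equation*}
Since $\int_{\T^d}w\,\mathrm{d}y = 0$, a single Poincar\'e inequality $\|w\|_{L^2}\le (2\pi)^{-1}\|\nabla w\|_{L^2}$ yields the stated bound on $\|\nabla w\|_{L^2}$, and a second application of Poincar\'e gives the $L^2$ bound on $w$. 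For the $H^2$ estimate, I would invoke the standard $L^2$ Calder\'on--Zygmund regularity for nondivergence-form elliptic operators on the flat torus with Lipschitz principal part and bounded drift, namely $\|\nabla^2 w\|_{L^2} \le C(\|\mathcal{L}w\|_{L^2} + \|w\|_{H^1})$, and combine it with the $H^1$ bound already established.

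The main obstacle is really only the first step: identifying $rw$ as the correct test function and recognizing that the resulting first-order coefficient $V$ is divergence-free by virtue of $\mathcal{L}^*r = 0$. Once this is in hand, the argument reduces to a weighted energy estimate plus two Poincar\'e inequalities. Note that the compatibility condition $\int_{\T^d} zr\,\mathrm{d}y = 0$ is not needed in the estimate itself; it enters only through the Fredholm alternative to guarantee the existence of a zero-mean weak solution to \eqref{standard cell problem}.
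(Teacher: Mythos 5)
Your proposal is correct and follows essentially the same route as the paper: multiplying by $r$ to put the operator in divergence form, observing that the first-order coefficient $\beta = \nabla\cdot(rA) - rb$ is divergence-free because $\mathcal{L}^*r = 0$ (so the drift term vanishes against $\nabla(w^2/2)$), and then combining weighted coercivity with the Poincar\'e inequality and standard $W^{2,2}$ regularity. Your side remark that the compatibility condition $\int_{\T^d} zr\,\mathrm{d}y = 0$ enters only through existence is also accurate here, since $w$ already has zero mean.
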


\begin{proof}
By elliptic regularity theory, we have $w\in H^2(\T^d)$. Furthermore, the invariant measure $r$ satisfies $r\in W^{1,p}(\T^d)$ for all $p<\infty$ and $r\geq \inf_{\T^d} r =: r_0>0$ in $\T^d$; see e.g., \cite{BS17,JZ23}. Rewriting equation \eqref{standard cell problem} in divergence form after multiplication by $r$, we obtain  
\begin{align*}
-\nabla\cdot (rA\nabla w)+\beta\cdot \nabla w = rz\quad\text{in }\T^d,
\end{align*}
where $\beta = \mathrm{div}(rA)-rb$. In particular, we find that
\begin{align*}
    \int_{\mathbb{T}^d}r(y)A(y)\nabla w(y) \cdot \nabla w(y)\,\mathrm{d}y + \int_{\mathbb{T}^d}(\beta(y) \cdot \nabla w(y)) w(y)\,\mathrm{d}y = \int_{\T^d} r(y)z(y)w(y) \,\mathrm{d}y.
\end{align*}
Note that $A\geq \lambda I_n$ in $\T^d$ for some $\lambda > 0$, that $\nabla\cdot \beta = 0$ weakly in $\T^d$ by definition of $r$, and that $\int_{\T^d} r(y)z(y)\, \mathrm{d}y = 0$. Let $\hat{w}:=\int_{\T^d} w(y)\, \mathrm{d}y$ be the mean value of $w$. Using Poincar\'{e} and H\"{o}lder inequalities and the uniform lower bound on $r$, we obtain that
\begin{align*}
r_0\lambda \|\nabla w\|_{L^2(\T^d)}^2 &\leq \int_{\T^d} rA\nabla w\cdot \nabla w \,\mathrm{d}y = \int_{\T^d} rA\nabla w\cdot \nabla w \,\mathrm{d}y + \int_{\T^d} \beta\cdot\nabla \left(\frac{w^2}{2}\right) \mathrm{d}y \\ &= \int_{\T^d} rzw \,\mathrm{d}y = \int_{\T^d} rz(w-\hat{w})\, \mathrm{d}y \\
&\leq \|r\|_{L^\infty(\T^d)}\|z\|_{L^2(\T^d)}\|w-\hat{w}\|_{L^2(\T^d)} \leq \frac{1}{2\pi}\|r\|_{L^\infty(\T^d)}\|z\|_{L^2(\T^d)}\|\nabla w\|_{L^2(\T^d)},
\end{align*}
which completes the proof.
\end{proof} 

\begin{remark}\label{Rk: r equals 1}
    When $A$ is a constant symmetric positive definite matrix and $\nabla\cdot b = 0$ (weakly), then the invariant measure is $r\equiv 1$.
\end{remark}

Let $\chi' \in H^2(\mathbb{T}^d)$ and $\chi'_{h} \in H^2(\mathbb{T}^d)$ be the solutions of following cell problems corresponding to the original SDE and the modified SDE with modified equation, respectively, i.e.,
\begin{align*}
    \mathcal{L} \chi' &= b - \langle b\rangle_r\quad\text{in }\T^d,\qquad \int_{\mathbb{T}^d} \chi(y)\, \mathrm{d}y = 0,\\
    \mathcal{L}^{h}\chi_h' &= b_h' - \langle b_h' \rangle_{r'_h}\quad\text{in }\T^d,\qquad \int_{\mathbb{T}^d} \chi_h'(y)\,\mathrm{d}y = 0,
\end{align*}
where $\langle b\rangle_r := \int_{\T^d} b(y)r(y)\,\mathrm{d}y$ and $\langle b_h'\rangle_{r'_h} := \int_{\mathbb{T}^d} b_h'(y)r_h'(y)\, \mathrm{d}y$ denote the averaged drift terms with respect to the original and modified invariant measure. These solutions satisfy the relation 
\begin{align*}
    \mathcal{L} (\chi' - \chi_h') &= \mathcal{L}\chi' - \mathcal{L}^h \chi_h' + (\mathcal{L}^h-\mathcal{L})\chi_h' \\
    &= (b - b_h') + (\langle b_h' \rangle_{r_h'} -\langle b \rangle_{r}) + (A-A_h'):\nabla^2\chi_h' + (b-b_h')\cdot\nabla \chi_h'  =: S_{h}.
\end{align*}
Introducing the discrepancy term
\begin{align}\label{discr term}
    \alpha_h := \|A-A_h'\|_{L^{\infty}(\T^d)} + \|b-b_h'\|_{L^{\infty}(\T^d)},
\end{align}
and noting that, for $h>0$ sufficiently small,
\begin{align}\label{beth}
\begin{split}
    \beta_h:=\left\lvert\langle b \rangle_{r} -  \langle b_h' \rangle_{r_h'}  \right\rvert &\leq \left\lvert \langle b-b_h' \rangle_{r} \right\rvert + \left\lvert \langle b_h'-b \rangle_{r}-\langle b_h'-b \rangle_{r_h'} \right\rvert + \left\lvert \langle b \rangle_{r}-\langle b \rangle_{r_h'} \right\rvert  \\&\leq \|b-b_h'\|_{L^{\infty}(\T^d)} + \left(\|b\|_{L^{\infty}(\T^d)}+1\right)\|r-r_h'\|_{L^2(\T^d)},
    \end{split}
\end{align}
we apply Lemma \ref{chi_h' estimate} to obtain that
\begin{align*}
    \|\chi' - \chi_{h}'\|_{H^2(\mathbb{T}^d)} \le C\|S_h\|_{L^2(\mathbb{T}^d)} \leq C\left(\|b-b_h'\|_{L^{\infty}(\T^d)} + \beta_h + \alpha_h\|\chi'\|_{H^2(\T^d)} + \alpha_h \|\chi' - \chi_{h}'\|_{H^2(\mathbb{T}^d)} \right).
\end{align*} 
Since $\alpha_h = \mathcal{O}(h^2)$, for $h>0$ sufficiently small we can absorb the last term into the left-hand side to deduce the error bound
\begin{align}\label{H2 bound on chi}
    \|\chi' - \chi_{h}'\|_{H^2(\mathbb{T}^d)} \le C\left(\|b-b_h'\|_{L^{\infty}(\T^d)} + \beta_h + \alpha_h\|\chi'\|_{H^2(\T^d)} \right) = \mathcal{O}(h^2),
\end{align}
where we used $\beta_h = \mathcal{O}(h^2)$ which follows from \eqref{beth} and the following lemma (with $\alpha_h = \mathcal{O}(h^2)$).
  
\begin{lemma}\label{r_h' estimate}
    Let $A,A_h'\in W^{1,\infty}(\mathbb{T}^d,\mathbb{R}_{\mathrm{sym}}^{d\times d})$ be uniformly elliptic diffusion matrices and $b,b_h'\in L^{\infty}(\mathbb{T}^d;\mathbb{R}^d)$ be drift vector fields. 
    Consider the corresponding invariant measures $r,r_h' \in L^2(\mathbb{T}^d)$ that solve (weakly) the following problems:  
    \begin{equation}\label{rrhp pro}
        \begin{aligned}
            -\nabla^2:(Ar) + \nabla\cdot(br) &= 0\quad\text{in }\T^d,\qquad \int_{\mathbb{T}^d}r(y)\, \mathrm{d}y = 1,\\
            -\nabla^2:(A_h'r_h') + \nabla\cdot(b_h' r_h') &= 0\quad\text{in }\T^d,\qquad \int_{\mathbb{T}^d}r_h'(y)\, \mathrm{d}y= 1. 
        \end{aligned}
    \end{equation}
    Then, if the discrepency term $\alpha_h$ defined as in \eqref{discr term} satisfies $\lim_{h\rightarrow 0}\alpha_h = 0$, there exists a constant $C >0$ such that
\begin{equation}\label{rh' bd}
    \|r - r_h'\|_{L^2(\mathbb{T}^d)}  \le C\|r\|_{L^2(\mathbb{T}^d)}\left(1+\|r\|_{L^2(\mathbb{T}^d)}\right) \alpha_h
\end{equation}
for $h>0$ sufficiently small.
\end{lemma}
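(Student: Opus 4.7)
\textbf{Proof Proposal for \cref{r_h' estimate}.} The plan is to subtract the two stationary Fokker--Planck--Kolmogorov equations in \eqref{rrhp pro} and analyze the resulting equation for the difference $\rho := r - r_h'$ by a duality argument, ultimately invoking the stability estimate of \cref{chi_h' estimate}. First, subtracting the two equations yields, in the sense of distributions,
\begin{equation*}
    \mathcal{L}^*\rho = -\nabla^2:(A\rho) + \nabla\cdot(b\rho) = \nabla^2:\bigl((A-A_h')r_h'\bigr) - \nabla\cdot\bigl((b-b_h')r_h'\bigr).
\end{equation*}
By the normalization conditions, $\int_{\T^d}\rho(y)\,\mathrm{d}y = 0$, and by elliptic regularity (applied to both $r$ and, for $h$ small enough, to $r_h'$, whose generator remains uniformly elliptic since $\alpha_h\to 0$), both $r$ and $r_h'$ are smooth enough to justify the integrations by parts below.

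Next, I would pass to the dual side. Let $\phi\in C^\infty(\T^d)$ be arbitrary. Since $r$ has unit integral, the Fredholm solvability condition $\int_{\T^d}(\phi-\langle\phi\rangle_r)\,r\,\mathrm{d}y = 0$ holds, so there is a unique $w\in H^2(\T^d)$ with $\int_{\T^d}w\,\mathrm{d}y = 0$ solving $\mathcal{L}w = \phi - \langle\phi\rangle_r$, where $\langle\phi\rangle_r := \int_{\T^d}\phi r\,\mathrm{d}y$. Using $\int_{\T^d}\rho\,\mathrm{d}y = 0$ to absorb the constant, followed by integration by parts twice,
\begin{equation*}
    \int_{\T^d}\rho\,\phi\,\mathrm{d}y \;=\; \int_{\T^d}\rho\,(\phi - \langle\phi\rangle_r)\,\mathrm{d}y \;=\; \int_{\T^d}\rho\,\mathcal{L}w\,\mathrm{d}y \;=\; \int_{\T^d}w\,\mathcal{L}^*\rho\,\mathrm{d}y,
\end{equation*}
and after moving the two derivatives of $\mathcal{L}^*\rho$ back onto $w$, this equals
\begin{equation*}
    \int_{\T^d}\nabla^2 w:\bigl((A-A_h')r_h'\bigr)\,\mathrm{d}y \;+\; \int_{\T^d}\nabla w\cdot\bigl((b-b_h')r_h'\bigr)\,\mathrm{d}y.
\end{equation*}
Cauchy--Schwarz combined with the definition of $\alpha_h$ from \eqref{discr term} yields
$\bigl|\int_{\T^d}\rho\,\phi\,\mathrm{d}y\bigr| \le \alpha_h\|r_h'\|_{L^2(\T^d)}\bigl(\|\nabla^2 w\|_{L^2(\T^d)}+\|\nabla w\|_{L^2(\T^d)}\bigr)$.

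Applying \cref{chi_h' estimate} to $w$ (with source $z = \phi - \langle\phi\rangle_r$) gives $\|w\|_{H^2(\T^d)}\le C\|\phi - \langle\phi\rangle_r\|_{L^2(\T^d)} \le C(1+\|r\|_{L^2(\T^d)})\|\phi\|_{L^2(\T^d)}$, where $C$ depends on $A,b,r$ (in particular on $\sup_{\T^d}r/\inf_{\T^d}r$) but not on $h$. Taking the supremum over $\phi$ of unit $L^2$-norm produces
\begin{equation*}
    \|\rho\|_{L^2(\T^d)} \;\le\; C\,\alpha_h\,\|r_h'\|_{L^2(\T^d)}\bigl(1+\|r\|_{L^2(\T^d)}\bigr).
\end{equation*}
Finally, I would close the estimate by the triangle inequality $\|r_h'\|_{L^2(\T^d)} \le \|r\|_{L^2(\T^d)} + \|\rho\|_{L^2(\T^d)}$, which, for $h>0$ small enough so that $C\,\alpha_h(1+\|r\|_{L^2(\T^d)}) \le \tfrac{1}{2}$, allows me to absorb the $\|\rho\|_{L^2(\T^d)}$ term on the right into the left-hand side, yielding \eqref{rh' bd}.

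The main technical subtlety I expect is the bookkeeping for the centering: the Fredholm condition for the original cell problem is orthogonality against $r$ (not against the constant $1$), while the natural cancellation that comes from $\int_{\T^d}\rho\,\mathrm{d}y = 0$ uses orthogonality against constants. The trick above, which shifts $\phi$ by $\langle\phi\rangle_r$, reconciles the two and costs only the harmless multiplicative factor $(1+\|r\|_{L^2(\T^d)})$. A secondary point requiring care is that \cref{chi_h' estimate} is applied with the operator $\mathcal{L}$ and measure $r$ of the unmodified problem, so all constants coming from $\sup r/\inf r$ are $h$-independent, which is what makes the absorption step in the final bootstrap legitimate.
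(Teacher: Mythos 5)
Your proposal is correct and follows essentially the same route as the paper: the paper also solves the dual problem $\mathcal{L}w = (r-r_h') - \langle r-r_h'\rangle_r$ (i.e.\ your duality argument with the specific choice $\phi = r - r_h'$, which makes the sup over test functions unnecessary), shifts the source by its $r$-average to satisfy the Fredholm condition, invokes Lemma~\ref{chi_h' estimate} for the $h$-independent $H^2$ bound, and closes via the same triangle-inequality absorption of $\|r_h'\|_{L^2(\T^d)}$.
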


\begin{proof}
Let $w\in H^2(\T^d)$ be the unique solution to the elliptic problem
\begin{align*}
-A:\nabla^2 w - b\cdot \nabla w = (r - r_h') - \langle r-r_h'\rangle_r\quad\text{in }\T^d,\qquad \int_{\T^d} w(y)\, \mathrm{d}y= 0.       
\end{align*}
Multiplying the above equation by $r - r_h'$, integrating over $\mathbb{T}^d$, noting  $\int_{\T^d} (r(y)-r_h'(y))\,\mathrm{d}y = 0$, and using the (weak) solution properties of $r$ and $r_h'$ for \eqref{rrhp pro}, we obtain that 
\begin{align*}
    \|r-r_h'\|_{L^2(\T^d)}^2 &= \int_{\T^d} (r-r_h') (-A:\nabla^2 w - b\cdot \nabla w)\, \mathrm{d}y \\&= \int_{\T^d} r_h' \left((A-A_h'):\nabla^2 w + (b-b_h')\cdot \nabla w\right) \mathrm{d}y\\& \le \|r_h'\|_{L^2(\T^d)} \|w\|_{H^2(\T^d)}\alpha_h \\
    &\leq C\left(\|r-r_h'\|_{L^2(\T^d)} + \|r\|_{L^2(\T^d)}\right)\|r-r_h'\|_{L^2(\T^d)}\left(1+\|r\|_{L^2(\T^d)}\right) \alpha_h \\
    &\le C\|r-r_h'\|_{L^2(\T^d)}^2  \left(1+\|r\|_{L^2(\T^d)}\right)\alpha_h + C\|r\|_{L^2(\T^d)} \left(1+\|r\|_{L^2(\T^d)}\right)\|r-r_h'\|_{L^2(\T^d)}\alpha_h,
\end{align*}
where we used Lemma \ref{chi_h' estimate} in the penultimate step. Since $\alpha_h\rightarrow 0$ as $h\rightarrow 0$, we can absorb the first term on the right-hand side into the left-hand side and obtain the estimate \eqref{rh' bd} for $h>0$ sufficiently small.   
\end{proof}
 
Building upon the preceding estimates, we establish the following error bound for the numerical approximation of the effective diffusivity.  

\begin{theorem}\label{Thm: Abar approx}
    Let the situation be as in Theorem \ref{Relation between Lagrangian and Eulerian frameworks}, and let $A_h',b_h'$ be given by \eqref{Ahp bhp}. Then, for the effective diffusivities $\overline{A}^L$ and $\overline{A}^L_h$ defined in \eqref{Effective diffusivity} and \eqref{A^L_h} respectively, there holds 
    \begin{equation*}
        \left\lvert \overline{A}_h^L - \overline{A}^L \right\rvert  = \mathcal{O}(h^2).
    \end{equation*}
\end{theorem}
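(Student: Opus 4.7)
The plan is to expand
\[
\overline{A}_h^L - \overline{A}^L = \int_{\T^d}(I_d+\nabla\chi_h')A_h'(I_d+\nabla\chi_h')^{\mathrm{T}} r_h'\,\mathrm{d}y - \int_{\T^d}(I_d+\nabla\chi')A(I_d+\nabla\chi')^{\mathrm{T}} r\,\mathrm{d}y
\]
via a telescoping decomposition that isolates one perturbation at a time. Concretely, I would rewrite the right-hand side as the sum of the four integrals
$\int_{\T^d}(I_d+\nabla\chi_h')(A_h'-A)(I_d+\nabla\chi_h')^{\mathrm{T}} r_h'\,\mathrm{d}y$,
$\int_{\T^d}(\nabla\chi_h'-\nabla\chi')A(I_d+\nabla\chi_h')^{\mathrm{T}} r_h'\,\mathrm{d}y$,
$\int_{\T^d}(I_d+\nabla\chi')A(\nabla\chi_h'-\nabla\chi')^{\mathrm{T}} r_h'\,\mathrm{d}y$, and
$\int_{\T^d}(I_d+\nabla\chi')A(I_d+\nabla\chi')^{\mathrm{T}}(r_h'-r)\,\mathrm{d}y$,
so that each integrand carries exactly one of the three differences $A_h'-A$, $\nabla\chi_h'-\nabla\chi'$, $r_h'-r$.

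Each of these differences is already known to be $\mathcal{O}(h^2)$ in the appropriate norm: $\|A_h'-A\|_{L^\infty(\T^d)}=\mathcal{O}(h^2)$ directly from the definition \eqref{Ahp bhp}, $\|\chi_h'-\chi'\|_{H^2(\T^d)}=\mathcal{O}(h^2)$ from \eqref{H2 bound on chi}, and $\|r_h'-r\|_{L^2(\T^d)}=\mathcal{O}(h^2)$ from \cref{r_h' estimate} with $\alpha_h=\mathcal{O}(h^2)$. To close each bound, I also need uniform (in $h$) control on the surviving factors. Smoothness of $A,b$ and classical elliptic regularity give $\chi'\in C^\infty(\T^d)$ and $r\in C^\infty(\T^d)$ with $r$ bounded above and below; the same regularity theory, applied to the cell problem for $\chi_h'$ and to the stationary Fokker--Planck--Kolmogorov equation for $r_h'$ and combined with the uniform smoothness of $A_h',b_h'$ built into \eqref{Ahp bhp}, yields analogous $h$-independent bounds $\|\nabla\chi_h'\|_{L^\infty(\T^d)}+\|r_h'\|_{L^\infty(\T^d)}\le C$ for all sufficiently small $h$.

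With these ingredients, H\"older's inequality on $\T^d$ closes each integral at order $h^2$: the first pulls out $\|A_h'-A\|_{L^\infty}$ and controls the remaining quadratic in $\nabla\chi_h'$ against $r_h'$ using the uniform $L^\infty$-bound on $r_h'$ and the uniform $L^2$-bound on $\nabla\chi_h'$; the middle two apply Cauchy--Schwarz to absorb $\|\nabla\chi_h'-\nabla\chi'\|_{L^2}=\mathcal{O}(h^2)$ against a factor that is uniformly bounded in $L^2$; the last applies Cauchy--Schwarz to absorb $\|r_h'-r\|_{L^2}=\mathcal{O}(h^2)$ against the $L^2$-bounded, purely unperturbed integrand built from $\chi'$ and $A$. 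Summing the four pieces delivers $|\overline{A}_h^L-\overline{A}^L|=\mathcal{O}(h^2)$.

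The main obstacle, and the only place beyond routine manipulation, is securing the uniform-in-$h$ regularity needed to multiply against the $L^2$ differences of the previous step. The given estimate \eqref{H2 bound on chi} controls $\chi_h'-\chi'$ in $H^2$, which is not quite enough in high dimensions to claim $\nabla\chi_h'\in L^\infty$ on its own; the resolution is that the perturbed coefficients $A_h',b_h'$ are explicit polynomials in $h$ with smooth, $h$-uniformly bounded derivatives (directly from the formulas for $b_1',\sigma_1'$), so standard Schauder and $W^{k,p}$ theory applied to the cell problem and to the stationary FPK equation give the required $h$-uniform $C^{k,\alpha}$-bounds on $\chi_h'$ and $r_h'$.
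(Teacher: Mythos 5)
Your proposal is correct and follows essentially the same route as the paper: the identical four-term telescoping decomposition (differing only in which factors carry $r$ versus $r_h'$), closed by H\"older's inequality together with \eqref{H2 bound on chi}, Lemma \ref{r_h' estimate}, and the $\mathcal{O}(h^2)$ bound on $A-A_h'$ from \eqref{Ahp bhp}. The only cosmetic difference is that you secure $h$-uniform $L^\infty$ bounds on $\nabla\chi_h'$ and $r_h'$ via Schauder/$W^{k,p}$ theory, whereas the paper gets by with a uniform $L^4$ bound on $\nabla\chi_h'$ from $W^{2,p}$ estimates and an $L^2$ bound on $r_h'$; both are valid given the smooth coefficients.
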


\begin{proof}
Using triangle and H\"{o}lder inequalities, we find that
\begin{align*}
    &\left\lvert \overline{A}_h^L - \overline{A}^L \right\rvert = \left\lvert\int_{\mathbb{T}^d}\left(I_d + \nabla \chi'_h\right)A'_h \left(I_d + \nabla \chi'_h\right)^{\mathrm{T}}r'_h\, \mathrm{d}y -\int_{\mathbb{T}^d}\left(I_d + \nabla \chi'\right)A\left(I_d + \nabla \chi'\right)^{\mathrm{T}}r\, \mathrm{d}y\right\rvert \\
    &\leq \left\lvert \int_{\mathbb{T}^d}\left(I_d + \nabla \chi'\right)A \left(\nabla(\chi'-\chi_h')\right)^{\mathrm{T}}r\, \mathrm{d}y \right\rvert + \left\lvert \int_{\mathbb{T}^d}\left(\nabla(\chi'-\chi_h')\right)A \left(I_d + \nabla \chi_h'\right)^{\mathrm{T}}r\, \mathrm{d}y \right\rvert \\
    &\qquad+ \left\lvert \int_{\mathbb{T}^d}\left(I_d + \nabla \chi'_h\right)\left(A-A'_h\right) \left(I_d + \nabla \chi'_h\right)^{\mathrm{T}}r'_h\, \mathrm{d}y  \right\rvert + \left\lvert \int_{\mathbb{T}^d}\left(I_d + \nabla \chi'_h\right)A \left(I_d + \nabla \chi'_h\right)^{\mathrm{T}}\left(r-r'_h\right) \mathrm{d}y  \right\rvert \\
    &\leq C\left(\|\nabla e_{\chi}\|_{L^2(\T^d)} \left(1 + \|\nabla \chi_h'\|_{L^2(\T^d)}\right) + \left( \|e_A\|_{L^{\infty}(\T^d)}\|r_h'\|_{L^2(\T^d)} + \|e_r\|_{L^2(\T^d)}\right) \left(1+\|\nabla \chi_h'\|_{L^4(\T^d)}\right)^2 \right)
\end{align*}
for some constant $C>0$, where we have written $e_{\chi}:= \chi'-\chi_h'$, $e_A := A-A_h'$, and $e_r := r-r_h'$. From \eqref{H2 bound on chi} and Lemma \ref{r_h' estimate}, we have that  
\begin{align*}
\|\nabla e_{\chi}\|_{L^2(\mathbb{T}^d)} = \mathcal{O}(h^2),\qquad \|e_A\|_{L^{\infty}(\mathbb{T}^d)} = \mathcal{O}(h^2),\qquad \|e_r\|_{L^2(\mathbb{T}^d)} = \mathcal{O}(h^2)
\end{align*}
for some constant $C>0$. Further, similarly to the derivation of \eqref{H2 bound on chi}, but using $W^{2,p}$ estimates for solutions to \eqref{standard cell problem} instead of Lemma \ref{chi_h' estimate}, we can find that $\|\nabla \chi_h'\|_{L^4(\T^d)}$ remains uniformly bounded as $h\rightarrow 0$. We conclude that
\begin{align*}
    \left\lvert \overline{A}_h^L - \overline{A}^L \right\rvert = \mathcal{O}(h^2),
\end{align*}
as required. 
\end{proof}

The following corollary reveals the connection between the effective diffusivity and the long-time covariance of the process.
\begin{corollary}
    Let the situation be as in Theorem \ref{Thm: Abar approx}. Let $\{\Tilde{X}^h_{t_n}\}_{n=1}^N$ be numerical solutions obtained from the modified Milstein scheme \eqref{Euler Maruyama with modified equation} with time step $h$, and let $\bm x_{t_N} =  (\bm x_{t_N,[1]},\bm x_{t_N,[2]},\dotsc, \bm x_{t_N,[d]})^T$ be 
    $M$ independent Monte Carlo realizations, where each $\bm x_{t_N,[i]} \in \mathbb{R}^M$ and $X_{t_N} = X_T$. Then, for $h>0$ sufficiently small, the empirical variance estimator satisfies
    \begin{equation}\label{eq:variance estimator}
        \left\lvert \frac{\mathrm{Var}(\bm x_{t_N})}{2T} - \overline{A}^L\right\rvert \leq C\left(h^2 +\frac{1}{T} +\frac{1}{\sqrt{M}}\right)
    \end{equation} 
for some constant $C>0$ independent of $h$, $T$, and $M$. 
\end{corollary}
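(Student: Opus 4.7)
The plan is to decompose the total error into four pieces by triangle inequality, using as an intermediary the continuous backward-error-modified process $X'_t$ associated to the integrator $\tilde X^h$ (whose weak error is controlled by \eqref{X'_t X_t err}):
\begin{equation*}
\left\lvert \frac{\mathrm{Var}(\bm x_{t_N})}{2T} - \overline{A}^L\right\rvert \le \underbrace{\left\lvert \frac{\mathrm{Var}(\bm x_{t_N}) - \mathrm{Var}(\tilde{X}^h_{t_N})}{2T}\right\rvert}_{E_{\mathrm{MC}}} + \underbrace{\left\lvert \frac{\mathrm{Var}(\tilde{X}^h_{t_N}) - \mathrm{Var}(X'_T)}{2T}\right\rvert}_{E_{\mathrm{BEA}}} + \underbrace{\left\lvert \frac{\mathrm{Var}(X'_T)}{2T} - \overline{A}_h^L\right\rvert}_{E_{\mathrm{long}}} + \underbrace{\bigl\lvert \overline{A}_h^L - \overline{A}^L\bigr\rvert}_{E_{\mathrm{hom}}}.
\end{equation*}
The final two terms are handled directly by results already proved: $E_{\mathrm{hom}} = \mathcal{O}(h^2)$ is exactly Theorem \ref{Thm: Abar approx}, and $E_{\mathrm{long}} = \mathcal{O}(1/T)$ follows from applying Theorem \ref{Relation between Lagrangian and Eulerian frameworks} to the modified SDE (which is valid for $h$ sufficiently small, since then $A_h'$ is uniformly elliptic and smooth, with corresponding invariant density $r_h'$ and corrector $\chi_h'$).

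For $E_{\mathrm{BEA}}$, the strategy is to apply the weak-error bound \eqref{X'_t X_t err} to the polynomial test functions $f_i(x) = x_{[i]}$ and $f_{ij}(x) = x_{[i]} x_{[j]}$, all of which lie in $C_P^\infty(\mathbb{R}^d)$. Combining via the identity $\mathrm{Var}(Y) = \mathbb{E}[Y\otimes Y] - \mathbb{E}[Y]\otimes \mathbb{E}[Y]$ and the triangle inequality produces $\bigl\lvert \mathrm{Var}(\tilde X^h_{t_N}) - \mathrm{Var}(X'_T)\bigr\rvert \le C_T h^2$, and dividing by $2T$ delivers the target $\mathcal{O}(h^2)$ rate provided the constant $C_T$ grows at most linearly in $T$. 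For $E_{\mathrm{MC}}$, I would invoke the standard sample-covariance estimate: for $M$ i.i.d.\ copies of a random vector $Y$ with bounded fourth moments, $\mathbb{E}\|\hat\Sigma_M - \mathrm{Var}(Y)\|_F \lesssim \|\mathrm{Var}(Y)\|_F/\sqrt{M}$. Since $\|\mathrm{Var}(\tilde X^h_{t_N})\|_F = \mathcal{O}(T)$ (by the two preceding steps combined with Theorem \ref{Relation between Lagrangian and Eulerian frameworks}), the factor of $T$ cancels after division by $2T$, giving $E_{\mathrm{MC}} \le C/\sqrt{M}$ in the usual expectation (or high-probability) sense.

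The main obstacle is controlling the $T$-dependence of the constants arising in $E_{\mathrm{BEA}}$ and $E_{\mathrm{MC}}$: the weak-error constant $C_T$ in \eqref{X'_t X_t err} applied to quadratic test functions is not a priori uniform in $T$, and must be shown to grow at most linearly for the division by $2T$ to preserve the $\mathcal{O}(h^2)$ rate; likewise, the Monte Carlo bound becomes sharp only because $\|\mathrm{Var}(\tilde X^h_{t_N})\|_F$ scales linearly in $T$. Both linear-in-$T$ growth facts can be justified from the bounded-on-torus structure of the coefficients $b,\sigma$ and their higher-order modifications $b_h',\sigma_h'$, which force the second moments of $\tilde X^h_{t_N}$ and $X'_T$ to grow at most linearly with $T$—this is consistent with (and indeed driven by) convergence to a diffusive limit with constant covariance rate $2\overline{A}^L_h$.
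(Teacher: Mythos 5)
Your proposal follows essentially the same route as the paper's proof: the identical four-term triangle-inequality decomposition through the backward-error process $X'_t$, with the last two terms handled by Theorem \ref{Thm: Abar approx} and the $\mathcal{O}(1/T)$ bound \eqref{rate 1overt} from Theorem \ref{Relation between Lagrangian and Eulerian frameworks}, the BEA term by applying \eqref{X'_t X_t err} to the first and second moment test functions, and the sampling term by a $1/\sqrt{M}$ Monte Carlo estimate (the paper cites the Central Limit Theorem where you use the sample-covariance moment bound). Your explicit attention to the $T$-dependence of the weak-error and variance constants is a point the paper's proof passes over silently, but it does not change the argument's structure.
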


\begin{proof}
    Using the triangle inequality, we split the error into four parts:
    \begin{equation}\label{eq:error_decomposition}
    	\begin{aligned}
    		\left\lvert \frac{\mathrm{Var}(\bm x_{t_N})}{2T} - \overline{A}^L\right\rvert &\leq \left\lvert\frac{\mathrm{Var}(\bm x_{t_N})}{2T} - \frac{\mathrm{Var}(\Tilde{X}^h_{t_N})}{2T}\right\rvert +   \left\lvert \frac{\mathrm{Var}(\Tilde{X}^h_{t_N})}{2T} -\frac{\mathrm{Var}(X'_{t_N})}{2T}\right\rvert\\ &+  \left\lvert \frac{\mathrm{Var}(X'_{t_N})}{2T} - \overline{A}_h^L\right\rvert + \left\lvert \overline{A}_h^L - \overline{A}^L\right\rvert.
    	\end{aligned}
    \end{equation}	
For the first component in \eqref{eq:error_decomposition}, by the Central Limit Theorem, there exists $L_1 >0$ such that
    \begin{equation*}
        \left\lvert \frac{\mathrm{Var}(\bm x_{t_N})}{2T} - \frac{\mathrm{Var}(\Tilde{X}^h_{t_N})}{2T}\right\rvert \leq \frac{L_1}{\sqrt{M}}.
    \end{equation*}
    Applying \eqref{X'_t X_t err} to the moment functions $f_1(x) = x$ and $f_2(x) = xx^{\mathrm{T}}$, we obtain that there exist constants $K_1, K_2 >0$ such that
        \begin{align*}
             \left\lvert \mathbb{E}\left[\tilde{X}^h_{t_N}\right] -\mathbb{E}\left[X'_{t_N}\right]\right\rvert \leq K_1h^2,\qquad
             \left\lvert \mathbb{E}\left[\left(\tilde{X}^h_{t_N}\right)\left(\tilde{X}^h_{t_N}\right)^{\mathrm{T}}\right] -\mathbb{E}\left[\left(X'_{t_N}\right)\left(X'_{t_N}\right)^{\mathrm{T}}\right]\right\rvert \leq K_2h^2
        \end{align*}
    for $h>0$ sufficiently small. From the covariance structure, we have that
   \begin{align*}
\frac{1}{2T} \left\lvert \mathrm{Var}(\tilde{X}_{t_N}^h) - \mathrm{Var}(X'_{t_N}) \right\rvert 
&\leq \frac{1}{2T} \Big\lvert \mathbb{E}\left[\left(\tilde{X}^h_{t_N}\right)\left(\tilde{X}^h_{t_N}\right)^{\mathrm{T}}\right]
- \mathbb{E}\left[\left(X'_{t_N}\right)\left(X'_{t_N}\right)^{\mathrm{T}}\right] \Big\rvert \\ &+ \frac{1}{2T} \Big\lvert \mathbb{E}\left[\tilde{X}^h_{t_N}\right] - \mathbb{E}\left[X'_{t_N}\right] \Big\rvert
\left( \Big\lvert\mathbb{E}\left[\tilde{X}^h_{t_N}\right] \Big\rvert +  \Big\lvert\mathbb{E}\left[X'_{t_N}\right]\Big\rvert\right).
\end{align*}
    Next, note that from \eqref{moment_est} in Theorem \ref{Relation between Lagrangian and Eulerian frameworks} that there exist $M_1,M_2\in \mathbb{R}^d$ and $L_2>0$ such that
\begin{equation*}
\begin{aligned}\left\lvert
\frac{\mathbb{E}\left[\tilde{X}^h_{t_N}\right]}{2T} - M_1\right\rvert + \left\lvert\frac{\mathbb{E}\left[X'_{t_N}\right]}{2T}  - M_2\right\rvert \le \frac{L_2}{T}.
\end{aligned}
\end{equation*}
Hence, using the triangle inequality, we deduce that for $h>0$ sufficiently small there holds
\begin{equation*}
\frac{1}{2T} \left\lvert \mathrm{Var}(\tilde{X}_{t_N}^h) - \mathrm{Var}(X'_{t_N}) \right\rvert \leq L_3 \left(h^2 + \frac{1}{T}\right)
\end{equation*}
for some constant $L_3>0$. Finally, \eqref{rate 1overt} in Theorem \ref{Relation between Lagrangian and Eulerian frameworks} establishes that there exists a constant $L_4>0$ such that
    \begin{equation*}
        \left\lvert \frac{\mathrm{Var}(X'_{t_N})}{2T} - \overline{A}_h^L\right\rvert \leq \frac{L_4}{T}.
    \end{equation*}
    Combining these estimates with \eqref{eq:error_decomposition} and Theorem \ref{Thm: Abar approx}, which bounds $\lvert \overline{A}_h^L - \overline{A}^L\rvert$, yields the final estimate \eqref{eq:variance estimator}.
    \end{proof}

\begin{remark}
When $b$ satisfies the centering condition \eqref{centering}, the long-time variance can be seen as the numerical approximation of $\overline{A}$ as in this case $\overline{A}^L = \overline{A}$.
\end{remark}

\section{Numerical results}
\noindent
In this section, we apply our method to study the behavior of several multidimensional diffusion processes. We compare the convergence rates of the Euler--Maruyama scheme and the Milstein scheme with modified equations, as illustrated in the following figures. 
The second-order convergence of the effective diffusion matrix by the Milstein scheme with modified equation will be confirmed by our numerical test. 

\subsection{Two-dimensional SDE with constant diffusion coefficient}
We consider the two-dimensional stochastic differential equation 
\begin{equation*}
	\left\{
	\begin{aligned}
		\mathrm{d}X^1_t &= 2\pi\sin(2\pi X^1_t)\sin(2\pi X^2_t)\,\mathrm{d}t + 2\,\mathrm{d}W^1_t,\qquad\, X_0^1 = 0,\\
		\mathrm{d}X^2_t &= 2\pi\cos(2\pi X^1_t)\cos(2\pi X^2_t)\,\mathrm{d}t + 2\,\mathrm{d}W^2_t,\qquad X_0^2 = 0,
	\end{aligned}
	\right.
\end{equation*}
which corresponds to $b(y_1,y_2) := 2\pi( \sin(2\pi y_1)\sin(2\pi y_2),\cos(2\pi y_1)\cos(2\pi y_2))^{\mathrm{T}}$ and $\sigma \equiv 2I_2$.

We note that the diffusion coefficient $A := \frac{1}{2}\sigma\sigma^{\mathrm{T}} = \sigma$ is a constant symmetric positive definite matrix, and the drift term $b$ is smooth and divergence-free. Hence, in view of Remark \ref{Rk: r equals 1}, the system admits the trivial invariant measure $r \equiv 1$.  It follows that the effective diffusivity is given by 
\begin{align}\label{Abar without r}
    \overline{A} = \int_{\mathbb{T}^d} \left(I_2 + \nabla \chi(y)\right)A(y)\left(I_2 + \nabla \chi(y)\right)^{\mathrm{T}}\,\mathrm{d}y.
\end{align}
In our numerical experiments, we plot the absolute error of $\frac{1}{2T}\mathrm{Var}(\bm{x}_{t_N,[1]})$ at the final time $T = 100$ with time steps $h \in [10^{-3},10^{-2}]$ for $10^7$ particles and $ \overline{A}_{11}$ obtained from \eqref{Abar without r} and a fine finite element approximation of $\chi$ on a fine mesh.

In Figure \ref{constant-sigma-1},  the effective diffusivity error is computed using: (i)~the standard Euler--Maruyama scheme, (ii)~the Euler--Maruyama scheme with modified equations. The results show that the Euler--Maruyama scheme with modified equations achieves second-order convergence, whereas the standard Euler--Maruyama scheme exhibits only first-order convergence, with significantly larger errors.
  
\begin{figure}[htbp]
\centering
\includegraphics[width= 10cm]{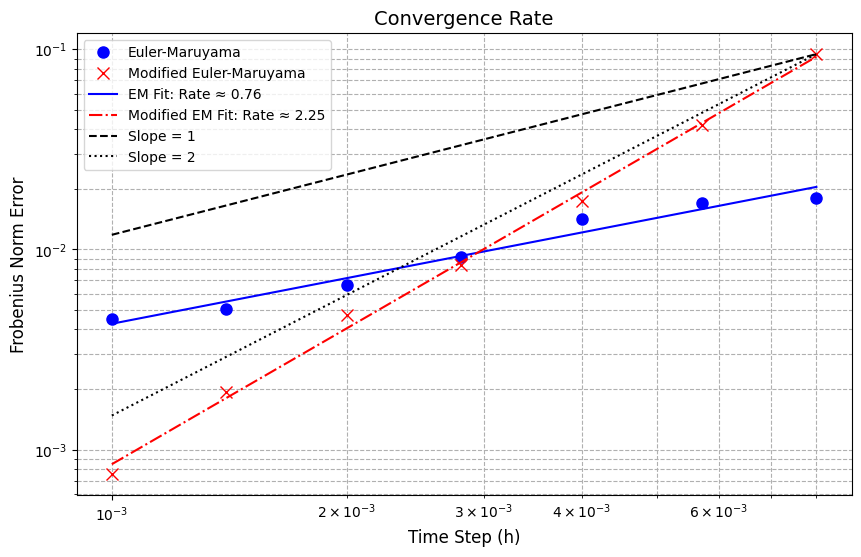}
\caption{Convergence rate of $\left|\frac{\mathrm{Var}(\bm{x}^1_n)}{2T} - \overline{A}_{11}\right|$ for the Euler--Maruyama scheme versus the modified Euler--Maruyama scheme.}
\label{constant-sigma-1}
\end{figure}

\subsection{Two-dimensional SDE with non-constant diffusion coefficient}
We now consider the two-dimensional SDE corresponding to the anisotropic diffusion coefficient 
$$A(y) :=
\begin{pmatrix}
&\frac{2 + \sin(2\pi y_1)}{2 + \sin(2\pi y_1)\sin(2\pi y_2)}  & 0\\
&0 &\frac{2 + \sin(2\pi y_2)}{2 + \sin(2\pi y_1)\sin(2\pi y_2)}
\end{pmatrix}
$$
and drift field
$$
b(y) := \left(\frac{2\pi \cos(2\pi y_1)}{2 + \sin(2\pi y_1)\sin(2\pi y_2)},\frac{2\pi\cos(2\pi y_2)}{2 + \sin(2\pi y_1)\sin(2\pi y_2)}\right)^{\mathrm{T}},
$$
for $y=(y_1,y_2)$. We observe that the system admits the invariant measure 
\begin{align*}
    r(y) := 1 + \frac{1}{2}\sin(2\pi y_1)\sin (2\pi y_2)
\end{align*}
for $y=(y_1,y_2)$, and that the drift field $b$ satisfies the centering condition.

For the numerical simulation in the Lagrangian framework, we set $ \sigma := \sqrt{2} A^{\frac{1}{2}}$, leading to the SDE system 
\begin{equation*}
\left\{
\begin{aligned}
    &\mathrm{d}X_t^1 = \frac{2\pi \cos(2\pi X_t^1)}{2 + \sin(2\pi X_t^1)\sin(2\pi X_t^2)}\,\mathrm{d}t + \sqrt{\frac{4 + 2\sin(2\pi X_t^1)}{2 + \sin(2 \pi X_t^1)\sin(2\pi X_t^2)}}\,\mathrm{d}W_t^1,\qquad X_0^1 = 0,\\
    &\mathrm{d}X_t^2 = \frac{2\pi\cos(2\pi X_t^2)}{2+\sin(2 \pi X_t^1)\sin(2\pi X_t^2)}\,\mathrm{d}t + \sqrt{\frac{4 + 2\sin(2\pi X_t^2)}{2 + \sin(2\pi X_t^1)\sin(2\pi X_t^2)}}\,\mathrm{d}W_t^2,\qquad X_0^2 = 0.
    \end{aligned}
    \right.
\end{equation*}

In Figures~\ref{fig:Non-constant_case_for}--\ref{fig:Non-constant_case_a_22}, we investigate numerical convergence for time steps $h\in [10^{-3},10^{-2}]$ with final time $T=100$, using $4\times10^6$ independent Monte Carlo trajectories. The effective diffusivity error is evaluated through two distinct approaches: (i)~the standard Euler--Maruyama scheme, and (ii)~the Milstein scheme with modified equations, where a fine finite element approximation serves as a reference. The results demonstrate that the modified Milstein scheme achieves significantly higher accuracy compared to the standard Euler--Maruyama scheme.

The convergence results are presented in Figures~\ref{fig:Non-constant_case_for}--\ref{fig:Non-constant_case_a_22}. Figure~\ref{fig:Non-constant_case_for} shows the Frobenius norm error of the effective diffusivity tensor as a function of the time step size $h$, where we denote $\{\bm x_{t_N}^i = (x_{t_N,[1]}^i,x_{t_N,[2]}^i,\dotsc, x_{t_N,[d]}^i)\}_{i=1}^M$. The standard Euler--Maruyama scheme exhibits a convergence rate close to first order, which is consistent with its theoretical weak convergence properties. In contrast, the Milstein scheme with modified equations achieves a higher convergence rate, closely approaching second-order convergence.

Figures~\ref{fig:Non-constant_case_a_11} and~\ref{fig:Non-constant_case_a_22} further show the convergence behavior for the individual components $A_{11}$ and $A_{22}$ of the effective diffusivity tensor. The modified scheme consistently outperforms the standard Euler--Maruyama scheme, not only reducing the error magnitude but also significantly improving the order of convergence.

\begin{figure}[htbp]
\centering
\includegraphics[width=10cm]{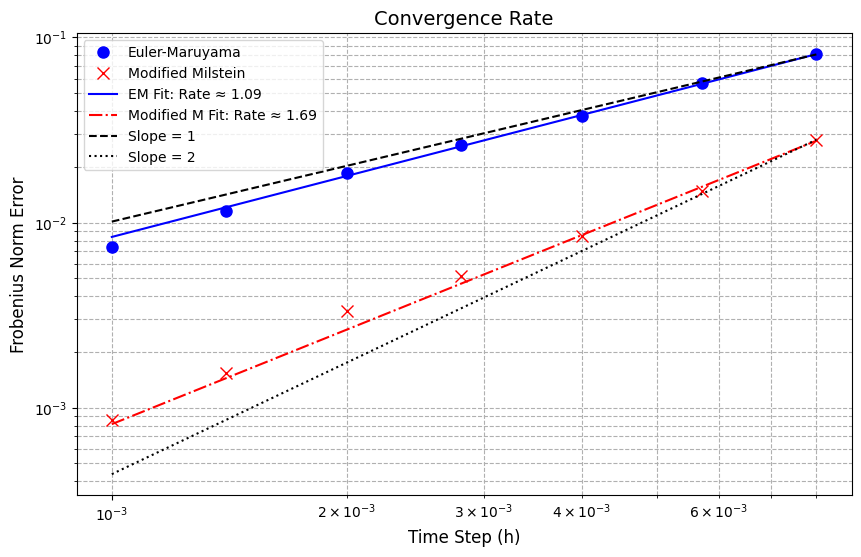}

\caption{Convergence rate  of Frobenius norm via Euler--Maruyama and modified Milstein}
\label{fig:Non-constant_case_for}
\end{figure}

\begin{figure}
\begin{minipage}[t]{0.48\textwidth}
\centering
\includegraphics[width=7cm]{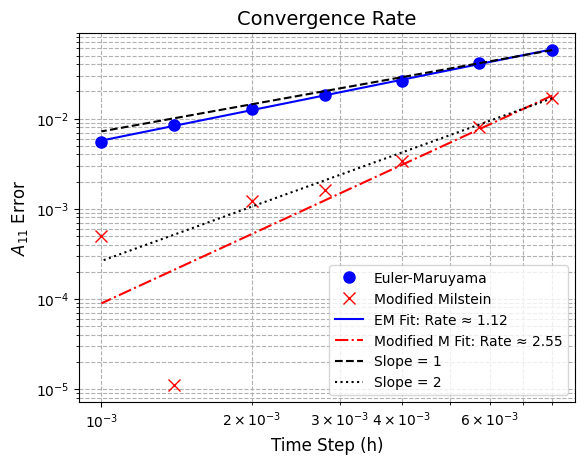}
\caption{Convergence rate of $A_{11}$ of Euler--Maruyama and modified Milstein}
\label{fig:Non-constant_case_a_11}
\end{minipage}
\begin{minipage}[t]{0.48\textwidth}
\centering
\includegraphics[width=7cm]{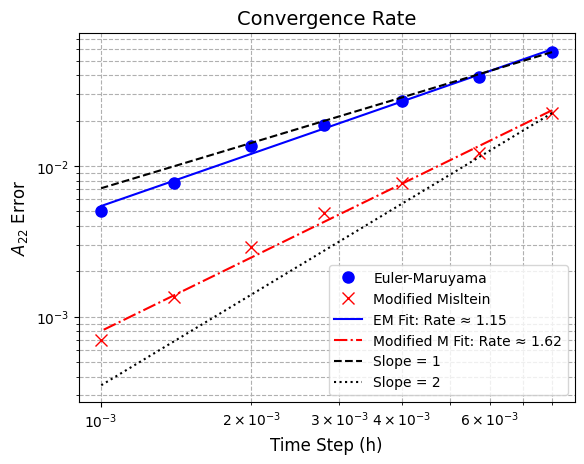}
\caption{Convergence rate of $A_{22}$ of Euler--Maruyama and modified Milstein}
\label{fig:Non-constant_case_a_22}
\end{minipage}
\end{figure}

\subsection{Three-dimensional SDE with non-constant diffusion coefficient}
We consider a three-dimensional SDE system characterized by the diffusion tensor 
$$A(y) :=
\begin{pmatrix}
&\frac{2 + \sin(2\pi y_1)}{2 + \sin(2\pi y_2)\sin(2\pi y_3)}  & 0 &0\\
&0 &\frac{2 + \sin(2\pi y_2)}{2 + \sin(2\pi y_2)\sin(2\pi y_3)} &0\\
&0 &0 &\frac{2 + \sin(2\pi y_3)}{2 + \sin(2\pi y_2)\sin(2\pi y_3)}
\end{pmatrix}
$$
coupled with the drift vector field
$$
b(y) := \left(\frac{2\pi \cos(2\pi y_1)}{ 2 + \sin(2\pi y_2)\sin(2\pi y_3)},\frac{2\pi \cos(2\pi y_2)}{ 2+\sin(2\pi y_2)\sin(2\pi y_3)},\frac{2\pi \cos(2\pi y_3)}{ 2+\sin(2\pi y_2)\sin(2\pi y_3)}\right)^{\mathrm{T}}
$$
for $y=(y_1,y_2,y_3)$. It is quickly seen that the system possesses the invariant measure 
\begin{align*}
    r(y) = 1 + \frac{1}{2}\sin (2\pi y_2)\sin(2\pi y_3)
\end{align*}
for $y=(y_1,y_2,y_3)$, and that the centering condition is satisfied.
 
 In our numerical experiments, we choose time step sizes $h$ ranging from $10^{-3}$ to $10^{-2}$, and set the final time $T = 20$. We generate $4\times10^6$ independent Monte Carlo particles. The reference effective diffusivity is determined using the result at $\Delta t = 5\times10^{-4}$. 
 
 Figures~\ref{fig:3d_case_for}--\ref{fig:3d_case_a_22} demonstrate the convergence results of the standard Euler--Maruyama scheme and the modified Milstein scheme for computing the effective diffusivity in the three-dimensional SDE with non-constant diffusion. Figure~\ref{fig:3d_case_for}
 plots the Frobenius norm error of the effective diffusivity tensor against the time step size $h$ on a log-log scale. The standard Euler--Maruyama scheme exhibits a first-order convergence rate, as expected from its weak order of accuracy, while the modified scheme achieves a significantly higher convergence rate, closely approaching second order.

The improved accuracy of the modified scheme is further evident in the component-wise errors, as shown in Figures~\ref{fig:3d_case_a_11} and~\ref{fig:3d_case_a_22} for $A_{11}$ and $A_{33}$, respectively. For these tensor components, the modified Milstein scheme consistently yields both lower error magnitudes and higher convergence rates, with observed rates exceeding two in some cases. This consistent improvement demonstrates the effectiveness of the modification in capturing the complex dynamics induced by the spatially varying, anisotropic diffusion tensor.

In addition, Figure \ref{fig:heat map of exact density and numerical density} compares the empirical steady-state density obtained from long-time simulations with the theoretical invariant measure. The close agreement between the two distributions confirms that the numerical scheme preserves the qualitative features of the invariant distribution. The second-order weak convergence of the invariant measure by the modified scheme can be directly derived from Corollary \ref{Convergence of invariant measure}. The relevant numerical experiment has been presented in \cite{ACV12}. 

These numerical results demonstrate substantial benefits of the modified Milstein scheme for simulating three-dimensional SDEs with non-constant diffusion coefficients. Second-order convergence and improved accuracy are especially valuable in practical computations, where reducing the discretization error without using very small time steps can lead to significant computational savings.

\begin{figure}[htbp]
\centering
\includegraphics[width=10cm]{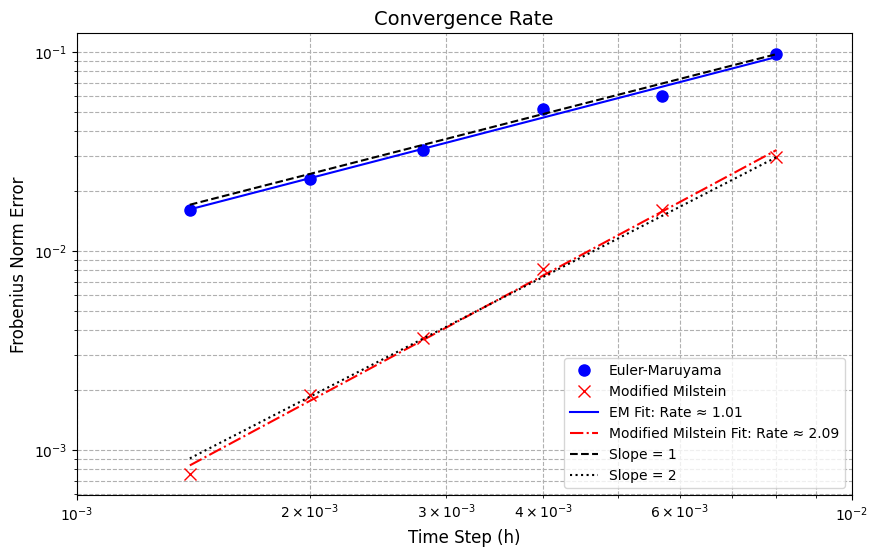}
\caption{Convergence rate  of Frobenius norm via Euler--Maruyama and modified Milstein}
\label{fig:3d_case_for}
\end{figure}
\begin{figure}
\begin{minipage}[t]{0.48\textwidth}
\centering
\includegraphics[width=7cm]{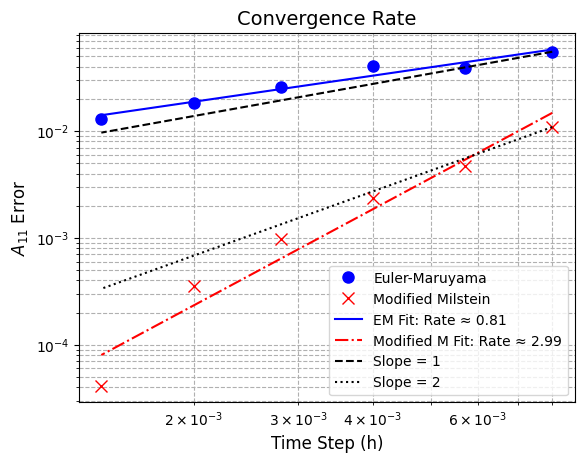}
\caption{Convergence rate of $A_{11}$ of Euler--Maruyama and modified Milstein}
\label{fig:3d_case_a_11}
\end{minipage}
\begin{minipage}[t]{0.48\textwidth}
\centering
\includegraphics[width=7cm]{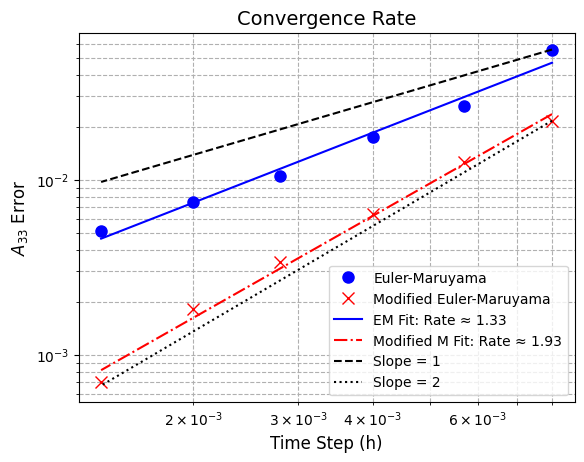}
\caption{Convergence rate of $A_{33}$ of Euler--Maruyama and modified Milstein}
\label{fig:3d_case_a_22}
\end{minipage}
\end{figure}

\begin{figure}
    \begin{minipage}[t]{0.48\textwidth}
    \centering
    \includegraphics[width= 8cm]{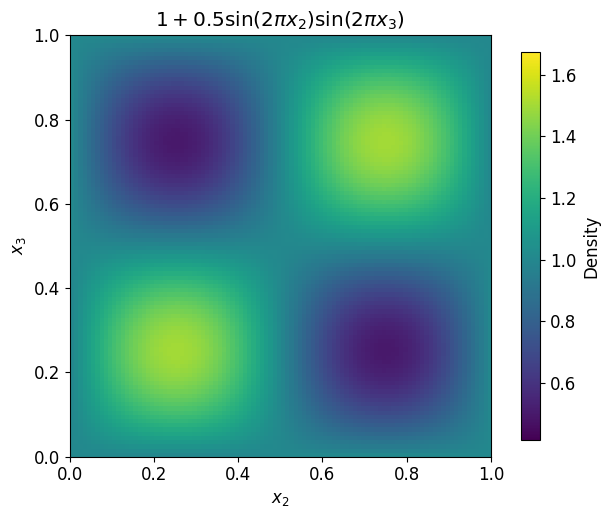}
    \label{fig:3d_case_density_exact}
    \end{minipage}
    \begin{minipage}[t]{0.48\textwidth}
    \includegraphics[width= 8cm]{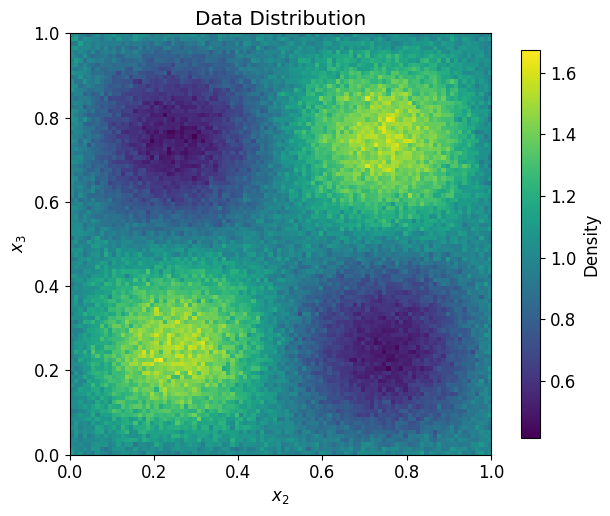}
    \label{fig:3d_case_density_num}
    \end{minipage}
    \caption{heat map of exact density and numerical density for the invariant measure}
    \label{fig:heat map of exact density and numerical density}
\end{figure}
\section{Conclusion}
\noindent
In this paper, we have developed a probabilistic numerical framework for accurately computing the effective diffusion matrix in periodic homogenization of nondivergence-form elliptic equations with large drift. Our rigorous analysis bridges the Lagrangian and Eulerian perspectives for effective diffusivity computation, extending previous results to scenarios where the centering condition fails.
This helps explain previous numerical difficulties observed with certain schemes. We introduced a modified equation approach that achieves second-order weak convergence, improving upon standard first-order methods. We derived complete error estimates for the approximation of the effective diffusivity. 

Numerical experiments in 2D and 3D demonstrated the superior convergence properties of our modified scheme compared to standard methods, achieving second-order accuracy even for non-constant diffusion coefficients. The Lagrangian particle method proves particularly effective for studying convection-enhanced diffusion in time-dependent chaotic flows. Our framework offers an efficient, accurate approach for homogenization problems with large drift, with potential applications to turbulent transport in fluid dynamics and anomalous diffusion in biological systems. 

\section*{Acknowledgement}
\noindent

ZZ was supported by the National Natural Science Foundation of China (Projects 12171406 and 92470103), the Hong Kong RGC grant (Projects 17307921 and 17304324), the Outstanding Young Researcher Award of HKU (2020-21), and Seed Funding for Strategic Interdisciplinary Research Scheme 2021/22 (HKU). The computations were performed using research computing facilities offered by Information Technology Services at the University of Hong Kong.

\bibliographystyle{plain}
\bibliography{ref_SWZ}
\end{document}